\numberwithin{equation}{section} \theoremstyle{plain}
\newtheorem{thm}{Theorem}[section]
\newtheorem{prop}[thm]{Proposition}
\newtheorem{cor}[thm]{Corollary}
\newtheorem{lem}[thm]{Lemma}
\newtheorem*{hpt*}{Hipoteza}
\newtheorem*{prob*}{Problem}
\newtheorem*{thm*}{Theorem}
\newtheorem*{pro*}{Proposition}
\newtheorem*{met*}{Method}
\newtheorem*{lem*}{Lemma}
\newtheorem{dfn}[thm]{Definition}
\DeclareMathOperator{\D}{d\!} 
 \DeclareMathOperator{\dom}{D}
 \DeclareMathOperator{\M}{m}
\theoremstyle{definition}
\newtheorem{exa}[thm]{{\it Example}}
\newtheorem*{exa*}{{\it Example}}
\theoremstyle{remark}
\newtheorem{rem}[thm]{{\it Remark}}
\newtheorem*{rem*}{{\it Remark}}
\def\rank{\textrm{rank }}
\def\C{\mathbb{C}}
\def\R{\mathbb{R}}
\def\Z{\mathbb{Z}}
\def\A{{\bf{A}}}
\def\B{{\bf{B}}}
\def\w{{\bf{w}}}
\def\u{{\bf{u}}}
\def\p{{\bf{p}}}
\def\a{{\bf{a}}}
\def\s{{\bf{s}}}
\def\b{{\bf{b}}}
\def\kk{\mathcal K}
\def\ff{\mathcal F}
\def\aa{\mathcal{A}}
\def\bb{\mathcal{B}}
\def\bb{\mathcal{B}}
\def\hh{\mathcal H}
\def\xx{\mathcal X}
\def\N{\mathbb N}
\def\munfty{\mu_{\mathtt{G}}}
\newcommand*{\MUN}[1]{\mu_{\mathtt{G},#1}}
\newcommand{\Le}{\leqslant}
\newcommand{\Ge}{\geqslant}
\newcommand*{\bor}[1]{\mathfrak B(#1)}
\newcommand*{\borc}[1]{\mathfrak B_c(#1)}
\newcommand*{\lin}{\mathtt{lin}\,}
\newcommand\frs{\mathfrak{F}}
\newcommand\ca{C_{\A}}
\newcommand*{\is}[2]{\langle#1,#2\rangle}
\newcommand*{\esf}{\mathsf{E}}
\newcommand*{\hsf}{\mathsf{h}}
\begin{document}
\title[Composition operators via inductive limits]{Unbounded composition operators via inductive limits: cosubnormal operators with matrix symbols. II}
\author[P. Budzy\'{n}ski]{Piotr Budzy\'{n}ski}
\author[P.\ Dymek]{Piotr Dymek}
\author[A. P{\l}aneta]{Artur P{\l}aneta}
\address{Katedra Zastosowa\'n Matematyki, Uniwersytet Rolniczy w Krakowie, ul. Balicka 253c, 30-198 Krak\'ow, Poland}
\email{piotr.budzynski@ur.krakow.pl}
\email{piotr.dymek@ur.krakow.pl}
\email{artur.planeta@ur.krakow.pl}

\subjclass[2010]{Primary 47B33, 47B37; secondary 47A05, 28C20.}
\keywords{Composition operator in $L^2$-space, inductive limits of operators, subnormal operators.}
\begin{abstract}
The paper deals with unbounded composition operators with infinite matrix symbols acting in $L^2$-spaces with respect to the gaussian measure on $\R^\infty$. We introduce weak cohyponormality classes $\EuScript{S}_{n,r}^*$ of unbounded operators and provide criteria for the aforementioned composition operators to belong to $\EuScript{S}_{n,r}^*$. Our approach is based on inductive limits of operators.
\end{abstract}
\setstretch{1.0}
\maketitle
\section{Introduction}
Bounded composition operators in $L^2$-spaces are classical object of investigation in operator theory (see the monograph \cite{sin-man}). Their unbounded counterparts have attracted attention quite recently but already proved to be source of interesting problems and results (see \cite{b-d-p-matrix-subn, b-j-j-s-ampa,b-j-j-s-jmaa-14,b-j-j-s-aim, bud-pla-s2, jab}. Many of them are related to the subnormality, a subject widely recognized as difficult and important in operator theory (see the monograph \cite{con} concerning bounded subnormal operators, and trilogy \cite{sto-sza-I, sto-sza-II, sto-sza-III} concerning unbounded ones).

There is no effective general criterion for subnormality of unbounded operators. As a consequence, the methods of verifying the subnormality of an operator depends on its properties. In general, the moment problem approach has been very successful (cf. \cite{sto-sza-moment, c-s-sz}), especially for operators with a dense set of $C^\infty$-vectors. On the other hand, for unbounded composition operators the consistency condition approach (related to a problem of selecting appropriate probability measures) is much better (cf. \cite{b-j-j-s-aim}). This calls for testing various methods when studying the subject. As shown recently, inductive limit techniques might also be helpful in this matter, e.g., in a case of weighted shifts on directed trees or composition operators (cf. \cite{b-j-j-s-jmaa-12, b-j-j-s-jmaa-13,b-d-p-matrix-subn}).

In a recent paper \cite{b-d-p-matrix-subn} we have provided a criterion for cosubnormality of unbounded composition operators induced by finite matrix symbols and also a new proof of the criterion for subnormality of these operators given in \cite{b-j-j-s-aim}. Inductive limit method played a pivotal role in the proofs. A natural setting for generalization and new area of testing our methods is where finite matrix symbols are exchanged by infinite ones. Unbounded composition operators with such symbols have already been investigated in \cite{bud-pla-s2}, where we have dealt with questions of their boundedness and dense definiteness. Motivated by these previous results and the criterion for subnormality of general unbounded operators due to Stochel and Szafraniec (see \cite[Theorem 3]{sto-sza-II}), we introduce in this paper classes $\EuScript{S}_{n,r}^*$ of unbounded operators closely related to cosubnormal operators (they resemble, in a sense, weak hyponormality classes studied in the case of bounded operators, cf. \cite{mcc-pau-2, jun-par-sto,cur-2, jun-lee-par}) and investigate under what conditions composition operators with infinite matrix symbols belong to the classes. We use inductive limits to achieve our goal. This results in three criteria (see Theorem \ref{cosubgauss-} and Propositions \ref{zale-szefa} and \ref{cosubgauss-positive}). The symbol of a composition operator in the first of the criteria has unspecified form whence in the second and the third one the symbol is induced by an infinite matrix. Using this type of matrices allowed us to formulate the criterion in a tractable form, which consequently enabled us to construct explicit examples. To the best of our knowledge none of the examples cannot be studied by other means than our criteria.

The paper is organized as follows. We begin by introducing basic notation and defining the classes $\EuScript{S}_{n,r}$  and $\EuScript{S}_{n,r}^*$ in Section 2. In Section 3 we provide necessary information about composition operators in $L^2$-spaces and their relatives -- weighted composition operators and partial composition operators. Then, in Section 4, we give a brief description of composition operators with finite and infinite matrix symbols in $L^2$-spaces with respect to the gaussian measure. The last part of the paper, Section 5, is devoted to the criteria and examples.
\section{Preliminaries}
Throughout the paper $\Z_+$, $\N$,  $\R$ and $\C$ stand for the set of nonnegative integers, positive integers, real numbers and complex numbers, respectively. For $\kappa\in\N$, $I_\kappa$ stands for the set $\{1,2,\ldots, \kappa\}$, the Cartesian product of $\kappa$-copies of $\R$ is denoted by $\R^\kappa$, and $\R^\infty$ denotes the Cartesian product of $\aleph_0$-copies of $\R$. If $t,s\in\N$ satisfy $s\Ge t$, then by $\pi^s_t$ and $\pi_t$ we denote the mappings $\pi^s_t\colon \R^s\ni(x_1,\ldots, x_s)\mapsto (x_1,\ldots, x_t)\in\R^t$ and $\pi_t\colon \R^\infty\ni(x_1,x_2,\ldots)\mapsto (x_1,x_2,\ldots, x_t)\in\R^t$. If $\{X_n\}_{n=1}^\infty$ is a sequence of subsets of a set $X$ such that $X_k\subseteq X_{k+1}$ for every $k\in N$ and $X=\bigcup_{n=1}^\infty X_n$, then we write $X_n\nearrow X$ as $n\to\infty$. The symmetric difference of sets $A$ and $B$ is denoted by $A\triangle B$. For a topological space $X$, $\bor{X}$ stands for the $\sigma$-algebra of all Borel subsets of $X$. If $\kappa\in \N$ and $\p=\{p_n\}_{n=1}^\infty\subseteq (0,+\infty)$, then $\ell^\kappa(\p)$, or $\ell^\kappa\big(\{p_n\}_{n=1}^\infty\big)$, stands for the weighted $\ell^\kappa$-space $\big\{\{x_n\}_{n=1}^\infty\in\R^\infty\colon \sum_{n=1}^\infty|x_n|^\kappa p_n<\infty\big\}$; $\ell^\kappa(\N)$ denotes the space $\ell^\kappa\big(\{1\}_{n=1}^\infty \big)$.

Let $\hh$ be a (complex) Hilbert space and $T$ be an operator in $\hh$ (all operators are linear in this paper). By $\dom(T)$, $\overline{T}$, and $T^*$ we denote the domain, the closure, and the adjoint of $T$, respectively (if they exists). If $T$ is closable and $\ff$ is a subspace of $\dom(T)$ such that $\overline{T|_\ff}=\overline{T}$, then $\ff$ is said to be a core of $T$. If $T$ is densely defined, $\dom(T) \subseteq \dom(T^*)$ and $\|T^*f\| \Le \|Tf\|$ for all $f \in \dom(T)$, then $T$ is called hyponormal. $T$ is said to be subnormal if $\dom(T)$ is dense in $\hh$ and there exist a complex Hilbert space $\kk$ and a normal operator $N$ in $\kk$ (i.e., $N$ is closed, densely defined and satisfies $N^*N=NN^*$) such that $\hh$ is isometrically embedded in $\kk$ and $Sh = Nh$ for all $h \in \dom(S)$.

Let $n\in\Z_+$, $m\in\N$ and $a=\{a_{p,q}^{i,j}\}_{p,q = 0, \ldots, n}^{i,j=1, \ldots, m} \subset \C$. Then $n_a$ denotes the greatest $n_a\in I_n\cup\{0\}$ satisfying the following condition
\begin{align*}
\text{there exist $i,j\in I_m$ such that $|a_{n_a,q}^{i,j}|+|a_{p,n_a}^{i,j}|>0$ for some $p,q\in I_n$.}
\end{align*}
Clearly, $a_{p,q}^{i,j}=0$ for all $i,j\in I_m$ and $p,q\in I_n$ such that $p>n_a$ and $q>n_a$.
\begin{dfn}
Let $n, r\in\Z_+$. We say that a densely defined operator $T$ in a Hilbert space $\hh$ belongs to the class $\EuScript{S}_{n,r}$ if and only if for every $m\in\N$ and every  $a=\{a_{p,q}^{i,j}\}_{p,q = 0, \ldots, n}^{i,j=1, \ldots, m} \subset \C$,
\begin{align} \label{n1+}
\sum_{i,j=1}^m \sum_{p,q=0}^{n_a} a_{p,q}^{i,j} \lambda^p \bar \lambda^q z_i \bar z_j \Ge 0, \quad \lambda, z_1, \ldots, z_m \in \C,
\end{align}
implies
\begin{align}\label{n3}
\sum_{i,j=1}^m \sum_{p,q=0}^{n_a} \sum_{k,l=0}^r a_{p,q}^{i,j} \is{T^{p+k} f_i^l} {T^{q+l} f_j^k} \Ge 0,
\end{align}
for every finite sequence $\{f_i^k\colon i=1,\ldots, m,\ k=0,\ldots, r\} \subseteq \dom(T^{{n_a+r}})$. In turn, we say that $T$ belongs to $\EuScript{S}_{n,r}^*$ if and only if $T^*$ belongs to $\EuScript{S}_{n,r}$.
\end{dfn}
\begin{rem}
The case of $n=r=0$ is of little interest to us, since every densely defined linear operator in $\hh$ belongs to $\EuScript{S}_{0,0}$ (use the classical Schur lemma). Therefore, in the rest of the paper we tacitly assume that $n+r\Ge 1$.
\end{rem}
The following is essentially contained in \cite[Theorem 3]{sto-sza-II} and \cite[Theorem 29]{c-s-sz} (for the reader convenience we give a sketch of the proof).
\begin{prop}\label{chsub}
Let $S$ be an operator in a complex Hilbert space $\hh$. Then the following conditions are satisfied:
\begin{enumerate}
\item[(i)] if $S$ is subnormal, then $S\in\EuScript{S}_{n,r}$ for all $n,r\in\Z_+$,
\item[(ii)] if $S\in\EuScript{S}_{n,0}$ for every $n\in\Z_+$, then $S|_{\dom^\infty(S)}$ is subnormal.
\end{enumerate}
\end{prop}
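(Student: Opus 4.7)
The plan for part (i) is to push everything through the spectral theorem for a normal extension. Let $N$ be a normal extension of $S$ acting on a Hilbert space $\kk\supseteq\hh$, with spectral measure $E$. Since $f_i^l\in\dom(S^{n_a+r})\subseteq\dom(N^{n_a+r})$ (by induction from $S\subseteq N$), every inner product in \eqref{n3} rewrites, via the functional calculus, as
\begin{equation*}
\is{S^{p+k}f_i^l}{S^{q+l}f_j^k}=\int_{\C}\lambda^{p+k}\bar\lambda^{q+l}\,d\mu_{(i,l),(j,k)}(\lambda),
\end{equation*}
where $\mu_{(i,l),(j,k)}:=\is{E(\cdot)f_i^l}{f_j^k}$. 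I would dominate all these complex measures by the finite positive Borel measure $\nu:=\sum_{i,l}\is{E(\cdot)f_i^l}{f_i^l}$, take Radon--Nikodym derivatives $\phi_{(i,l),(j,k)}=d\mu_{(i,l),(j,k)}/d\nu$, and observe that the density matrix $\Phi(\lambda)=(\phi_{(i,l),(j,k)}(\lambda))$ is positive semidefinite for $\nu$-a.e.\ $\lambda$, as $E$ is projection-valued.

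The decisive bookkeeping step then writes the sum in \eqref{n3} as
\begin{equation*}
\int_{\C}\sum_{i,j=1}^{m} A(\lambda)_{ij}\,\Psi(\lambda)_{ij}\,d\nu(\lambda),
\end{equation*}
with $A(\lambda)_{ij}:=\sum_{p,q=0}^{n_a} a_{p,q}^{i,j}\lambda^p\bar\lambda^q$ and $\Psi(\lambda)_{ij}:=\sum_{k,l=0}^{r}\lambda^k\bar\lambda^l\phi_{(i,l),(j,k)}(\lambda)$. Hypothesis \eqref{n1+} says $A(\lambda)\Ge 0$ for every $\lambda\in\C$, and the substitution $w_{(i,l)}:=z_i\bar\lambda^l$ into $w^*\Phi(\lambda)w\Ge 0$ yields $z^*\Psi(\lambda)z\Ge 0$ $\nu$-a.e. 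The Schur product theorem (the Hadamard product of positive semidefinite matrices is positive semidefinite), combined with the identity $\sum_{i,j}(A\circ\Psi)_{ij}=e^{\top}(A\circ\Psi)e\Ge 0$ for $e=(1,\dots,1)^{\top}$, makes the integrand nonnegative $\nu$-a.e., proving (i).

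For part (ii) the plan is to invoke \cite[Theorem 3]{sto-sza-II} (or equivalently \cite[Theorem 29]{c-s-sz}): that theorem characterizes subnormality of $S|_{\dom^\infty(S)}$ by a family of positive semidefiniteness conditions on moment-type sesquilinear forms, and the class $\EuScript{S}_{n,0}$ is engineered so that, as $n$ and $m$ vary, its defining inequalities match that characterization (the matrix $(a_{p,q}^{i,j})$ playing the role of the coefficients of a generic Hermitian polynomial in $(\lambda,\bar\lambda,z,\bar z)$ that is nonnegative on $\C\times\C^{m}$). The principal obstacle I expect is the bookkeeping in (i), particularly the verification that $\Psi(\lambda)\Ge 0$; a secondary technical concern is rigorously swapping sums and integrals, which rests on $f_i^l\in\dom(N^{n_a+r})$ making $\lambda^{p+k}\bar\lambda^{q+l}$ integrable against each (bounded-variation) measure $\mu_{(i,l),(j,k)}$.
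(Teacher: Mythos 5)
Your argument for (i) is correct and is essentially the paper's: both pass to a normal extension $N$, take Radon--Nikodym densities of the spectral measures $\is{E(\cdot)\,\cdot}{\cdot}$ with respect to a scalar dominating measure, and conclude by integrating the entrywise (Schur) product of two positive semidefinite matrix-valued densities; the only organizational difference is that the paper first collapses the $k,l$-sums by forming $g_i=\sum_{k=0}^r N^{*k}f_i^k$ (using $\is{N^{*k}f}{N^{*l}f'}=\is{N^{l}f}{N^{k}f'}$ and the domain identities $\dom(N^{*k})=\dom(N^{k*})$), whereas you keep the $(i,l)$-indexed family and absorb the factors $\lambda^k\bar\lambda^l$ into the density matrix $\Psi$ via the substitution $w_{(i,l)}=z_i\bar\lambda^l$ --- a harmless reshuffling that even spares you the adjoint-domain bookkeeping. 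Part (ii) is handled exactly as in the paper, by invoking \cite[Theorem 29]{c-s-sz}.
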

\begin{proof}[Sketch of the proof]
(i) Assume that $S$ is a subnormal operator. Fix $n,r\in \Z_+$. For $m\in\N$ and $a=\{a_{p,q}^{i,j}\}_{p,q = 0, \ldots, n}^{i,j=1, \ldots, m} \subset \C$ such that \eqref{n1+} is satisfied, we define the polynomials $p^{i,j}$ of two complex variables $\lambda$ and $\bar\lambda$ by
\begin{align*}
p^{i,j}(\lambda,\bar\lambda)=\sum_{p,q=0}^{n_a} a_{p,q}^{i,j} \lambda^p \bar \lambda^q.
\end{align*}
Let $N$ be a normal extension of $S$ in a Hilbert space $\kk$. Consider $\{f_i^k\}_{i=1,\ldots, m}^{k=0,\ldots, r} \subseteq \dom(S^{{{n_a}+r}})$. Then $\{f_i^k\}_{i=1,\ldots, m}^{k=0,\ldots, r} \subseteq \dom(N^{{{n_a}+r}})$. Since $\dom(N^{*k})=\dom(N^{k*})$ for every $k\in\Z_+$, we have $\dom(N^{{n_a}+r})\subseteq \dom(N^{*r})$ and $N^{*r}\dom(N^{{n_a}+r})\subseteq \dom(N^{n_a})$. This implies that
\begin{equation*}
g_i=\sum_{k=0}^r N^{*k}f_i^k\in\dom(N^{n_a}) \quad \text{for} \quad i\in I_m.
\end{equation*}
Moreover, since $\is{N^{*k}f}{N^{*l} f'}=\is{N^{l}f}{N^{k} f'}$ for all $f,f'\in\dom\big(N^{\max\{k,l\}}\big)$ and $k,l\in\Z_+$, we get
\begin{align*}
\sum_{p,q=0}^{n_a} \sum_{k,l=0}^r a_{p,q}^{i,j} \is{S^{p+k} f_i^l} {S^{q+l} f_j^k}
=\sum_{p,q=0}^{n_a} \sum_{k,l=0}^r a_{p,q}^{i,j} \is{N^{p+k} f_i^l} {N^{q+l} f_j^k}\\
=\sum_{p,q=0}^{n_a} \sum_{k,l=0}^r a_{p,q}^{i,j} \is{N^pN^{*l} f_i^l} {N^qN^{*k} f_j^k}
=\sum_{p,q=0}^{n_a} a_{p,q}^{i,j} \is{N^p g_i} {N^q g_j}.
\end{align*}
Hence, proving (i) amounts to showing that
\begin{equation*}
 \sum_{i,j=1}^m \sum_{p,q=0}^{n_a} a_{p,q}^{i,j}\is{N^p g_i}{N^q g_j}\Ge 0.
 \end{equation*}
Let $E$ be the spectral measure of $N$. For $i,j\in\{1,\ldots m\}$, let $h_{i,j}$ be the Radon-Nikodym derivative of the complex measure $\is{E(\cdot)g_i}{g_j}$ with respect to the non-negative measure $\mu=\sum\limits_{i=1}^m\is{E(\cdot)g_i}{g_i}$. Arguing as in the proof of \cite[Theorem 3]{sto-sza-II} we deduce that 
\begin{align*}
 \sum_{i,j=1}^m \sum_{p,q=0}^{n_a} a_{p,q}^{i,j}\is{N^pg_i}{N^qg_j}=\sum_{i,j=1}^m \int_\C p^{i,j}(\lambda,\bar\lambda)h_{i,j}(\lambda)\, d\mu(\lambda)\Ge 0.
 \end{align*}
This completes the proof of (i).

(ii) This follows directly from \cite[Theorem 29]{c-s-sz}.
\end{proof}
It is worth noticing that every operator in $\EuScript{S}_{n,1}$, $n\in\Z_+$, belongs to the class of hyponormal operators, provided its domain is invariant for the adjoint, in a sense. This follows from the fact that $\EuScript{S}_{n+s,1}\subseteq \EuScript{S}_{n,1}$ for any $s\in\Z_+$ and the following.
\begin{prop}
Let $T$ be densely defined operator such that $T\in\EuScript{S}_{0,1}$. If $\dom(T)\subseteq\dom(T^*)$ and $T^*\dom(T)\subseteq \dom(T)$, then $T$ is hyponormal.
\end{prop}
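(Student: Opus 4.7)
The plan is to apply the defining inequality for $\EuScript{S}_{0,1}$ to a judiciously chosen pair of test vectors and then simplify the cross term using the adjoint identity. I would begin by unpacking what $T\in\EuScript{S}_{0,1}$ means when $n=0$ and $r=1$. Since $I_{0}=\varnothing$, the only admissible value of $n_{a}$ is $0$, so the data $a=\{a^{i,j}_{p,q}\}$ collapses to a Hermitian positive semidefinite scalar matrix $(a^{i,j}_{0,0})_{i,j=1}^{m}$; condition \eqref{n1+} is then exactly this positive semidefiniteness, and \eqref{n3} becomes
\begin{equation*}
\sum_{i,j=1}^{m}a^{i,j}_{0,0}\sum_{k,l=0}^{1}\is{T^{k}f_{i}^{l}}{T^{l}f_{j}^{k}}\Ge 0
\end{equation*}
for every finite family $\{f_{i}^{k}\}\subseteq\dom(T)$. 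The case of interest is $m=1$, $a^{1,1}_{0,0}=1$, which (writing $f=f_1^0$, $h=f_1^1$) reduces to
\begin{equation*}
\|f\|^{2}+2\,\mathrm{Re}\is{Tf}{h}+\|Th\|^{2}\Ge 0,\qquad f,h\in\dom(T).
\end{equation*}

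Next I would use the invariance hypotheses to produce the right substitution. Fix $g\in\dom(T)$. Since $\dom(T)\subseteq\dom(T^{*})$, the vector $T^{*}g$ is defined, and since $T^{*}\dom(T)\subseteq\dom(T)$, it lies in $\dom(T)$, so we may legitimately set $f:=-T^{*}g$ and $h:=g$ in the inequality above. Moreover, by the definition of the adjoint applied to $T^{*}g\in\dom(T)$ and $g\in\dom(T^{*})$, the cross term simplifies via
\begin{equation*}
\is{TT^{*}g}{g}=\is{T^{*}g}{T^{*}g}=\|T^{*}g\|^{2},
\end{equation*}
which is real. Substituting accordingly yields
\begin{equation*}
\|T^{*}g\|^{2}-2\|T^{*}g\|^{2}+\|Tg\|^{2}\Ge 0,
\end{equation*}
that is, $\|T^{*}g\|\Le\|Tg\|$, which is hyponormality.

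I do not anticipate any genuine obstacle in this argument; the only step requiring care is the index bookkeeping that forces $n_{a}=0$ and collapses \eqref{n3} to the two-variable inequality above. Once that reduction is made, the conclusion follows from a single well-chosen substitution, with both domain-invariance assumptions being used precisely to make $f=-T^{*}g$ an admissible test element and to validate the identity $\is{TT^{*}g}{g}=\|T^{*}g\|^{2}$.
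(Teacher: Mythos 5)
Your proposal is correct and follows essentially the same route as the paper: reduce \eqref{n1+}--\eqref{n3} to the case $m=1$, $a_{0,0}^{1,1}=1$, obtain $\|f\|^{2}+2\,\mathrm{Re}\is{Tf}{g}+\|Tg\|^{2}\Ge 0$ for $f,g\in\dom(T)$, and substitute $f=-T^{*}g$. The only difference is that you spell out the domain bookkeeping and the identity $\is{TT^{*}g}{g}=\|T^{*}g\|^{2}$, which the paper leaves implicit.
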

\begin{proof}
Note that \eqref{n1+} is satisfied with $m=1$, $n=0$ and $a_{0,0}^{1,1}=1$. Then, by \eqref{n3} with $r=1$, we have $\is{f}{f}+\is{g}{Tf}+\is{Tf}{g}+\is{Tg}{Tg}\Ge 0$ for all $f,g\in \dom(T)$. Substituting $f=-T^*g$ into this inequality we get $\|T^*g\|\Le\|Tg\|$ for every $g\in\dom(T)$, which completes the proof.
\end{proof}
\section{Composition operators}
Let $(X,\aa,\mu)$ be a $\sigma$-finite measure space. Let $\A$ be an $\aa$-measurable transformation of $X$. Define the measure $\mu\circ \A^{-1}$ on $\aa$ by setting $\mu\circ \A^{-1} (\sigma)=\mu(\A^{-1}(\sigma))$, $\sigma\in\aa$. If $\A$ is nonsingular, i.e., $\mu\circ \A^{-1}$ is absolutely continuous with respect to $\mu$, then the operator
    \begin{align*}
    \ca\colon L^2(\mu) \supseteq \dom(\ca) \to L^2(\mu)
    \end{align*}
given by
   \begin{align*}
    \dom(\ca)=\{f \in L^2(\mu) \colon f\circ \A \in L^2(\mu)\} \text{ and } \ca f=f\circ \A \text{ for } f\in \dom(C_\A),
   \end{align*}
is well defined and closed in $L^2(\mu)$ (cf. \cite[Proposition 1.5]{b-j-j-s-ampa}), where $L^2(\mu)=L^2(X,\aa,\mu)$ is the space of all $\aa$-measurable $\C$-valued functions with $\int_X|f|^2\D\mu<\infty$. Call it a {\em composition operator} induced by $\A$; we say that $\A$ is the {\em symbol} of $\ca$ then. If the Radon-Nikodym derivative
    \begin{align*}
    \hsf_{\bf \A}=\frac{\D \mu\circ \A^{-1}}{\D\mu}
    \end{align*}
belongs to $L^\infty(\mu)$, the space of all essentially bounded $\aa$-measurable $\C$-valued functions on $X$, then $\ca$ is bounded on $L^2(\mu)$ and $ \|\ca\|=\|\hsf_{\bf A}\|_{L^\infty(\mu)}^{1/2}$. The reverse is also true. By the measure transport theorem we get
    \begin{align*}
    \dom(\ca)=L^2((1+\hsf_{\bf A})\D\mu).
    \end{align*}
It follows from \cite[Proposition 3.2]{b-j-j-s-ampa} that
    \begin{align}\label{dense}
    \text{$\overline{\dom(\ca)}=L^2(\mu)$ if and only if $\hsf_{\bf A}<\infty$ a.e.\ $[\mu]$}.
    \end{align}
As shown below, in case of finite measure spaces dense definiteness of $\ca$ is automatic.
\begin{lem}\label{densepower}
If $(X,\aa,\mu)$ is a finite measure space and $\A$ is a nonsingular transformation of $X$, then $\chi_\sigma\in\dom(\ca^n)$ for every $\sigma\in\aa$ and $n\in\N$. Moreover, $\dom^\infty(\ca)$ is dense in $L^2(\mu)$.
\end{lem}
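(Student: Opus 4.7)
The plan is to prove the first statement by induction on $n$, showing simultaneously that $\chi_\sigma \in \dom(\ca^n)$ and that $\ca^n \chi_\sigma = \chi_{\A^{-n}(\sigma)}$ for every $\sigma \in \aa$. Since $\mu$ is finite, any characteristic function $\chi_\tau$ with $\tau\in\aa$ lies in $L^2(\mu)$, because $\|\chi_\tau\|^2 = \mu(\tau)\Le\mu(X)<\infty$. This single observation is what powers the whole argument.

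For the base case $n=1$, I note that $\chi_\sigma\circ\A = \chi_{\A^{-1}(\sigma)}$ (nonsingularity guarantees that $\A^{-1}(\sigma)\in\aa$), and again by finiteness of $\mu$ this lies in $L^2(\mu)$. Hence $\chi_\sigma\in\dom(\ca)$ and $\ca\chi_\sigma = \chi_{\A^{-1}(\sigma)}$. For the inductive step, assume the conclusion for $n$. Then $\ca^n\chi_\sigma = \chi_{\A^{-n}(\sigma)}$ and $\A^{-n}(\sigma)\in\aa$, so applying the base case to the set $\A^{-n}(\sigma)$ yields $\chi_{\A^{-n}(\sigma)}\in\dom(\ca)$ with $\ca\chi_{\A^{-n}(\sigma)} = \chi_{\A^{-(n+1)}(\sigma)}$. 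Since $\dom(\ca^{n+1}) = \{f\in\dom(\ca^n)\colon \ca^n f\in\dom(\ca)\}$, this gives $\chi_\sigma\in\dom(\ca^{n+1})$ and completes the induction.

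For the moreover part, the first assertion shows $\chi_\sigma\in\bigcap_{n=1}^\infty\dom(\ca^n) = \dom^\infty(\ca)$ for every $\sigma\in\aa$. Therefore $\dom^\infty(\ca)$ contains the linear span of all characteristic functions of measurable sets, i.e.\ all simple functions, which form a dense subspace of $L^2(\mu)$ by the standard approximation theorem. Hence $\dom^\infty(\ca)$ is dense in $L^2(\mu)$.

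There is essentially no obstacle here: the finiteness of $\mu$ trivializes both the integrability check at every stage and the density conclusion. The only subtlety worth mentioning is the need to verify, via induction, the explicit formula $\ca^n\chi_\sigma = \chi_{\A^{-n}(\sigma)}$, which is what lets the base case propagate.
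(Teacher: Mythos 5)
Your proof is correct, and the first assertion is handled exactly as in the paper: the whole content is the identity $\chi_\sigma\circ\A=\chi_{\A^{-1}(\sigma)}$ (with $\A^{-1}(\sigma)\in\aa$ by measurability) together with the finiteness of $\mu$, which the paper compresses into one sentence and you unpack as an induction with the explicit formula $\ca^n\chi_\sigma=\chi_{\A^{-n}(\sigma)}$. The only divergence is in the ``moreover'' part: the paper first records that each $\dom(\ca^n)$ is dense and then invokes an external result (Theorem 4.7 of the cited Budzy\'nski--Jab{\l}o\'nski--Jung--Stochel paper, a general equivalence for density of $\dom^\infty$ of a composition operator), whereas you observe directly that every $\chi_\sigma$ lies in $\bigcap_{n}\dom(\ca^n)=\dom^\infty(\ca)$, so that $\dom^\infty(\ca)$ already contains the simple functions and is therefore dense. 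Your route is more self-contained and, given the first assertion, the citation is not actually needed; the paper's route buys nothing extra here beyond pointing the reader to the general criterion. Both arguments are valid.
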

\begin{proof}
Since $\mu(X)<+\infty$ and $\chi_\sigma\circ\A=\chi_{\A^{-1}(\sigma)}$, we deduce that $\chi_\sigma\in\dom(C_{\A}^n)$ for all $\sigma\in\aa$ and $n\in\N$. Therefore, $\dom(C_{\A}^n)$ is dense in $L^2(\mu)$ for every $n\in\Z_+$. This and \cite[Theorem 4.7]{b-j-j-s-ampa} yield the ``moreover'' part.
\end{proof}

Now we recall some information concerning weighted composition operators. Let $(X,\aa, \nu)$ be a $\sigma$-finite measure space, $\A$ be a nonsingular $\aa$-measurable transformation of $X$ and $\w$ be a $\aa$-measurable $\C$-valued function on $X$ such that the measure $(|\w|^2\D\nu)\circ \A^{-1}$ is absolutely continuous with respect to $\nu$. A {\em weighted composition operator} $W_{\A,\w} \colon L^2(\nu) \supseteq \dom(W_{\A,\w}) \to L^2(\nu)$ is defined by
   \begin{align*}
    \dom(W_{\A,\w}) & = \{f \in L^2(\nu) \colon \w \cdot (f\circ \A) \in L^2(\nu)\},\\
    W_{\A,\w} f & = \w \cdot (f\circ \A), \quad f \in \dom(W_{\A,\w}).
   \end{align*}
Any such operator $W_{\A,\w}$ is closed. Moreover, $W_{\A,\w}$ is densely defined if and only if $\hsf_\A\cdot(\esf_\A (|\w|^2)\circ \A^{-1})<\infty$ a.e. $[\nu]$, where $\esf_\A (\cdot)$ denotes the conditional expectation operator with respect to $\sigma$-algebra $\A^{-1}(\aa)$  (cf. \cite[Lemma 6.1]{cam-hor}). We refer the reader to \cite{b-j-j-s-wco} for more information on unbounded weighted composition operators and references.

The adjoint of a composition operator induced by $\aa$-bimeasurable transformation turns out to be the weighted composition operator induced by the inverse of the symbol (cf. \cite[Corollary 7.3]{b-j-j-s-ampa} and \cite[Remark 7.4]{b-j-j-s-ampa}):
\begin{align}\label{adjoint}
\begin{minipage}{85ex}
{\em If $\A$ is an invertible transformation of $X$ such that both $\A$ and $\A^{-1}$ are $\aa$-measurable and nonsingular, then $\ca^*=W_{\A^{-1}, \hsf_\A}$.}
\end{minipage}
\end{align}
If $\mu$ is a finite measure, then the above implies immediately the following.
\begin{lem}\label{adjointpowers}
Let $n\in\N$. Suppose $(X,\aa,\mu)$ is a finite measure space and $\A$ is an invertible transformation of $X$ such that both $\A$ and $\A^{-1}$ are $\aa$-measurable and nonsingular. Then the following conditions are satisfied:
\begin{enumerate}
\item[(i)] $\displaystyle \dom\big((C_{\A}^*)^n\big)  = \bigcap_{k=1}^n \dom\big(C_{\A^k}^*\big)$,
\item[(ii)] $\big(\ca^*\big)^n\subseteq\big(\ca^n\big)^*=C_{\A^n}^*$,
\item[(iii)] $\displaystyle \big(\ca^*\big)^n=\big(\ca^n\big)^*$, whenever $\displaystyle \dom\big(C_{\A^n}^*\big)=\bigcap_{k=1}^n \dom\big(C_{\A^k}^*\big)$.
\end{enumerate}
\end{lem}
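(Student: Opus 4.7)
The whole argument hinges on the description of the adjoint from \eqref{adjoint}, namely $C_\A^*=W_{\A^{-1},\hsf_\A}$, so that $(C_\A^*f)(x)=\hsf_\A(x)\,f(\A^{-1}(x))$; and on the same formula applied to $\A^k$, which yields $C_{\A^k}^*=W_{\A^{-k},\hsf_{\A^k}}$. A preliminary step I would establish is the Radon-Nikodym chain rule
\[
\hsf_{\A^k}=\prod_{j=0}^{k-1}\bigl(\hsf_\A\circ \A^{-j}\bigr)\quad \text{a.e. }[\mu],
\]
proved by induction on $k$ from the measure transport theorem (both sides equal $\D(\mu\circ \A^{-k})/\D\mu$).

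For (i), I would iterate the formula for $C_\A^*$: a direct induction on $n$, using the inductive definition $\dom((C_\A^*)^n)=\{f\in\dom((C_\A^*)^{n-1}):(C_\A^*)^{n-1}f\in\dom(C_\A^*)\}$ together with the chain rule above, shows that $f\in\dom((C_\A^*)^n)$ precisely when $\hsf_{\A^k}\cdot(f\circ \A^{-k})\in L^2(\mu)$ for every $k=1,\ldots,n$, and that in this case $(C_\A^*)^kf$ equals this product. Since $\dom(C_{\A^k}^*)=\{f\in L^2(\mu):\hsf_{\A^k}\cdot(f\circ \A^{-k})\in L^2(\mu)\}$ by the description of $C_{\A^k}^*$, the domain is exactly $\bigcap_{k=1}^n\dom(C_{\A^k}^*)$.

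For (ii), the inclusion $(C_\A^*)^n\subseteq(C_\A^n)^*$ is the standard adjoint inequality $(AB)^*\supseteq B^*A^*$ iterated; its hypotheses are satisfied because $C_\A^n$ is densely defined by Lemma \ref{densepower}. The equality $(C_\A^n)^*=C_{\A^n}^*$ reduces, since both sides are closed and $C_\A^n\subseteq C_{\A^n}$ by inspection of the domains, to showing $\overline{C_\A^n}=C_{\A^n}$. I would prove this by verifying that $L^\infty(\mu)$ is a core of $C_{\A^n}$: finiteness of $\mu$ gives $L^\infty(\mu)\subseteq\dom(C_\A^n)$, and for $f\in\dom(C_{\A^n})$ the truncations $f_N=f\chi_{\{|f|\Le N\}}$ converge to $f$ in $L^2(\mu)$ while $f_N\circ \A^n\to f\circ \A^n$ in $L^2(\mu)$ by dominated convergence, dominated by $|f\circ \A^n|\in L^2(\mu)$.

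Finally, (iii) is immediate from (i) and (ii): under the additional hypothesis, (i) yields $\dom((C_\A^*)^n)=\dom(C_{\A^n}^*)$, while (ii) already provides the inclusion $(C_\A^*)^n\subseteq C_{\A^n}^*$, so equal domains combined with the inclusion force equality. The only step in the whole argument that is not a bookkeeping exercise with definitions is the core property of $L^\infty(\mu)$ in (ii), and even that is routine thanks to the finiteness of $\mu$.
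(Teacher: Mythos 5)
Your proof is correct and follows essentially the same route as the paper: part (i) via the identification $C_{\A^k}^*=W_{\A^{-k},\hsf_{\A^k}}$ together with the Radon--Nikodym chain rule, part (ii) via the standard inclusion $(B^*)^n\subseteq (B^n)^*$ plus the identity $\big(C_\A^n\big)^*=C_{\A^n}^*$, and part (iii) by combining the two. The only difference is that you supply self-contained arguments (induction via the measure transport theorem for the chain rule, and the $L^\infty$-truncation core argument for $\overline{C_\A^n}=C_{\A^n}$) where the paper simply cites \cite[Lemma 15]{b-j-j-s-aim} and \cite[Corollary 4.2]{b-j-j-s-ampa}; both of your substitutes are sound.
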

\begin{proof}
Since $ C_\A^n\subseteq C_{\A^n}$ (cf. \cite[Inclusion (3.3)]{b-j-j-s-ampa}), Lemma \ref{densepower} implies that $\ca^n$ and $C_{\A^n}$ are densely defined. This together with \cite[Corollary 4.2.]{b-j-j-s-ampa} yields
\begin{align}\label{adjointpowers+}
\big(\ca^n\big)^*=\big(\overline{\ca^n}\big)^*=C_{\A^n}^*.
\end{align}
By \cite[Lemma 15]{b-j-j-s-aim}, we have
\begin{align}\label{radonAn}
\hsf_{\A^n}=\hsf_\A \cdot \hsf_\A\circ \A^{-1}\cdot \hsf_\A\circ \A^{-2}\cdots \hsf_\A\circ \A^{-(n-1)}\quad \text{ a.e. $[\mu]$},\ n\in\N.
\end{align}

In view of \eqref{adjoint} and \eqref{radonAn}, we obtain the equality
\begin{align}\label{adjointpowers+++}
\dom\big((C_{\A}^*)^n\big)=\bigcap_{k=1}^n\big\{f\in L^2(\mu)\colon \hsf_{\A^k} f\circ \A^{-k}\in L^2(\mu)\big\}=\bigcap_{k=1}^n\dom\big(C_{\A^k}^*\big).
\end{align}
Thus (i) is satisfied.

The fact that $(B^*)^n\subseteq (B^n)^*$ for any operator $B$ in a Hilbert space such that the adjoints exist imply that
\begin{align}\label{adjointpowers++}
\big(\ca^*\big)^n\subseteq \big(C_{\A}^n\big)^*,
\end{align}
which combined with \eqref{adjointpowers+} proves (ii).

The equality in (iii) follows from \eqref{adjointpowers+}, \eqref{adjointpowers+++}, \eqref{adjointpowers++} and the assumption on $\dom\big(C_{\A^n}^*\big)$.
\end{proof}
That the inclusion $(\ca^*)^n\subseteq C_{\A^n}^*$ in Lemma \ref{adjointpowers} can be proper is shown below.
\begin{exa}
Let $X=[0,1]$, $\aa=\bor{[0,1]}$ and $\mu(\sigma)=\int_{\sigma}\frac{1}{\sqrt{x}}\D\M_1$, where $\M_1$ is the Lebesgue measure on $\R$. Let $\A(x)=1-x$. Clearly, $\A$ and $\A^{-1}$ are $\aa$-measurable and nonsingular. Since $\A^2(x)=x$ for every $x\in X$, we get $C_{\A^2}^*=I$, where $I$ denotes the identity operator. On the other hand, by \eqref{adjointpowers+++} we have
\begin{align*}
\dom\big((C_\A^*)^2\big)=\Big\{f\in L^2(\mu) \colon \int_{X} |f(x)|^2 \sqrt{\tfrac{x}{1-x}}\D\mu<\infty\Big\}\neq L^2(\mu).
\end{align*}
This proves that $\dom\big((C_\A^*)^2\big)\neq \dom(C_{\A^2}^*)$.
\end{exa}
Now we show that certain families generated by characteristic functions form cores for $n$-tuples of weighted composition operators (this generalizes \cite[Proposition 3.3]{b-d-p-matrix-subn}). Given operators $T_1, \ldots, T_n$, $n\in\N$, in a Hilbert space $\hh$ and a subspace $\ff\subseteq\bigcap_{i=1}^n\dom(T_i)$, we say that $\ff$ is a core for $(T_1, \ldots, T_n)$ if $\ff$ is dense in $\bigcap_{i=1}^n\dom(T_i)$ with respect to the graph norm $\|f\|_{(T_1, \ldots, T_n)}^2:=\|f\|^2+\sum_{i=1}^n\|T_i f\|^2$.
\begin{prop}\label{rdzen2}
Let $(X,\aa,\mu)$ be a $\sigma$-finite measure space and let $\bb\subseteq\aa$ be a family of sets satisfying the following conditions
\begin{enumerate}
\item[(i)] for all $A, B \in \bb$, $A \cap B \in \bb$,
\item[(ii)] $\aa=\sigma(\bb)$,
\item[(iii)] there exists $\{X_n\}_{n=1}^\infty\subseteq \bb$ such that $X_k\subseteq X_{k+1}$ for every $k\in\N$ and $\mu\big(X\setminus \bigcup_{n=1}^\infty X_n\big)=0$.
\end{enumerate}
Let $n\in\N$. Suppose $\A_i\colon X\to X$, $i \in I_n$, is invertible, $\A_i$ and $\A_i^{-1}$ are $\aa$-measurable and nonsingular. Let $\w_i\colon X\to \C$, $i\in I_n$, be $\aa$-measurable. If $\ff:=\lin\big\{\chi_\sigma\colon \sigma\in\bb\big\}\subseteq \bigcap_{i=1}^n\dom(W_{\A_i,\w_i})$, then $\ff$ is a core of $(W_{\A_1,\w_1}, \ldots, W_{\A_n,\w_n})$.
\end{prop}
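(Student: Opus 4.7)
My plan is to aggregate the $n$ individual graph norms into a single weighted $L^2$-norm, reducing the proposition to a density statement in one $L^2$-space, and then to establish that density by a functional monotone class argument built on the $\pi$-system $\bb$.

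The first step is the reduction. For each $i\in I_n$, let $h_i$ denote the Radon--Nikodym derivative of $(|\w_i|^2\D\mu)\circ\A_i^{-1}$ with respect to $\mu$, which exists by the hypothesis accompanying the definition of weighted composition operators. The change of variables for pushforward measures gives
\begin{equation*}
\int_X|\w_i|^2|f\circ\A_i|^2\D\mu=\int_X|f|^2h_i\D\mu
\end{equation*}
for every $\aa$-measurable $f\colon X\to\C$, with both sides interpreted in $[0,+\infty]$; the finiteness of this common value characterizes membership of $f\in L^2(\mu)$ in $\dom(W_{\A_i,\w_i})$, and in that case the value equals $\|W_{\A_i,\w_i}f\|^2$. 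Setting $g=1+\sum_{i=1}^n h_i$ and $\nu=g\D\mu$, we obtain
\begin{equation*}
\bigcap_{i=1}^n\dom(W_{\A_i,\w_i})=L^2(\nu)\quad\text{and}\quad \|f\|_{(W_{\A_1,\w_1},\ldots,W_{\A_n,\w_n})}^2=\int_X|f|^2\D\nu,
\end{equation*}
so it suffices to show that $\ff$ is dense in $L^2(\nu)$. The hypothesis $\ff\subseteq\bigcap_i\dom(W_{\A_i,\w_i})$ forces $\nu(\sigma)<\infty$ for every $\sigma\in\bb$, which together with (iii) makes $\nu$ $\sigma$-finite and, as $\nu\ll\mu$, still gives $X_k\nearrow X$ $\nu$-a.e.

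The second step is the monotone class argument. Fix $k\in\N$ and let $\mathcal H_k$ be the family of bounded $\aa$-measurable functions $f\colon X\to\C$ with $f\chi_{X_k}\in\overline{\ff}^{L^2(\nu)}$. Then $\mathcal H_k$ is a vector space containing the constant $1$ (since $X_k\in\bb$ gives $\chi_{X_k}\in\ff$) and containing $\chi_\sigma$ for every $\sigma\in\bb$ (since $\sigma\cap X_k\in\bb$ by (i), so $\chi_{\sigma\cap X_k}\in\ff$). Because $\nu(X_k)<\infty$, dominated convergence in $L^2(\nu)$ on $X_k$ shows that $\mathcal H_k$ is closed under bounded monotone pointwise limits. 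Since $\bb$ is a $\pi$-system by (i) with $\sigma(\bb)=\aa$ by (ii), the functional monotone class theorem yields that $\mathcal H_k$ contains every bounded $\aa$-measurable function. For arbitrary $f\in L^2(\nu)$ the truncations $f_k:=f\chi_{\{|f|\Le k\}}\chi_{X_k}$ are bounded and $\aa$-measurable, hence $f_k=f_k\chi_{X_k}\in\overline{\ff}^{L^2(\nu)}$ by the previous step; since $|f_k|\Le|f|$ and $f_k\to f$ $\nu$-a.e., dominated convergence in $L^2(\nu)$ gives $f\in\overline{\ff}^{L^2(\nu)}$, finishing the proof. The main obstacle to anticipate is really the initial bookkeeping: the individual measures $h_i\D\mu$ need not be $\sigma$-finite on $\bb$, which is why I bundle them into the single density $g$ and exploit the common exhausting sequence $\{X_k\}$ from (iii) uniformly in $i$.
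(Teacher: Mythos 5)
Your argument is correct, and its first step coincides with the paper's: both pass from the joint graph norm to the single weighted $L^2$-norm of $\nu=\big(1+\hsf_{\A_1,\w_1}+\cdots+\hsf_{\A_n,\w_n}\big)\D\mu$, so that the core property becomes density of $\ff$ in $L^2(\nu)$. The difference lies in how the rest is handled. The paper outsources everything after this reduction: it quotes Lemma 3.2 of \cite{b-d-p-matrix-subn} and Proposition 10 of \cite{b-j-j-s-wco} to get $\hsf_{\A_i,\w_i}<\infty$ a.e.\ (hence $\sigma$-finiteness of $\nu$), and then Proposition 9 of \cite{b-j-j-s-wco} for the density of $\lin\{\chi_\sigma\colon\sigma\in\bb\}$ in $L^2(\nu)$. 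You instead extract $\nu(\sigma)<\infty$ for $\sigma\in\bb$ directly from the hypothesis $\ff\subseteq\bigcap_{i=1}^n\dom(W_{\A_i,\w_i})$, which together with (iii) also yields $\hsf_{\A_i,\w_i}<\infty$ a.e.\ without any appeal to external lemmas, and you prove the density by a functional monotone class argument over the $\pi$-system $\bb$, localized to the sets $X_k$ and finished by truncation and dominated convergence. What your route buys is self-containedness and nominally weaker hypotheses (the invertibility of $\A_i$ and the measurability and nonsingularity of $\A_i^{-1}$ are never used); what the paper's route buys is brevity and consistency with \cite{b-j-j-s-wco}, where the density statement is already isolated as a reusable proposition. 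Two cosmetic remarks: the monotone class theorem is normally stated for real-valued functions, so one should formally split into real and imaginary parts; and the identification $\bigcap_{i=1}^n\dom(W_{\A_i,\w_i})=L^2(\nu)$ tacitly uses that $\nu\Ge\mu$, so that the two spaces have the same null sets --- both points are routine and do not affect the validity of the proof.
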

\begin{proof}
Let $\hsf_{\A_i,\w_i}$, $i\in I_n$, denote the Radon-Nikodym derivative of the measure $\big(|\w_i|^2\D\mu\big)\circ\A_i^{-1}$ with respect to the measure $\mu$. By \cite[Lemma 3.2]{b-d-p-matrix-subn} and \cite[Proposition 10]{b-j-j-s-wco}, $\hsf_{\A_i,\w_i}<\infty$ a.e. $[\mu]$ for every $i\in I_n$. Therefore, the measure $\big(1+\hsf_{\A_1,\w_1}+\ldots+\hsf_{\A_n,\w_n}\big)\D\mu$ is $\sigma$-finite. Combining \cite[Lemma 3.2]{b-d-p-matrix-subn} and \cite[Proposition 9]{b-j-j-s-wco}, we see that $\ff$ is a core of $(W_{\A_1,\w_1}, \ldots, W_{\A_n,\w_n})$ (see also the proof of \cite[Proposition 3.3]{b-d-p-matrix-subn}).
\end{proof}
It is sometimes convenient to consider composition operators, or even weighted composition operators, induced by partial transformations of $X$, i.e., mappings defined not on the whole of $X$, but on a subset of $X$; such composition operators (resp. weighted composition operators) are occasionally called {\em partial composition operators} (resp. {\em partial weighted composition operators}). Suppose $Y\in\aa$. Let $\B\colon Y\to X$ and $\w\colon Y\to\C$ be $\aa$-measurable having $\aa$-measurable extensions $\hat\B\colon X\to X$ and $\hat \w\colon X\to\C$. If the weighted composition operator $W_{\hat\B,\hat \w}$ is well-defined, then we define the operator $W_{\B,\w}\colon \dom(W_{\B,\w})\to L^2(\mu)$ by
\begin{align*}
W_{\B,\w}=W_{\hat\B,\chi_Y \hat \w}.
\end{align*}
Clearly, if $Y=X$, then the above definition agrees with the previous one given for ``everywhere defined'' transformations, which justifies the notation. The partial composition operator comes out of it, when we consider $\w=\chi_Y$. Let us note that the definition is independent of the choice of extensions (see \cite[Proposition 7]{b-j-j-s-wco}). In particular, we have
\begin{align}
\begin{minipage}{85ex}\label{zgoda}
{\em If $\hat\A\colon X\to X$ is $\aa$-measurable and nonsingular, $\hat \u\colon X\to \C$ is $\aa$-measurable, and $\A$ and $\u$ are their restriction to a full measure $\mu$ subset $Y$ of $X$, then $W_{\hat\A,\hat \u}=W_{\A,\u}$.}
\end{minipage}
\end{align}
In view of \eqref{zgoda}, we see that Lemma \ref{densepower} is still valid if a composition operator $C_\A$ is induced by a (partial) nonsingular measurable transformation $\A\colon Y\to X$ defined on a full measure $\mu$ subset $Y$ of $X$. Moreover, if additionally $\A$ is injective, $\A(Y)$ is a set of full measure $\mu$ and $\A^{-1}\colon \A(Y)\to X$ is nonsingular and measurable, then we get also the claim of Lemma \ref{adjointpowers}. That ``partial'' counterpart of Proposition \ref{rdzen2} is true can be easily proved as well. By \eqref{adjoint} and \eqref{zgoda}, if $\mu$ is finite, then $C_\A$ is densely defined operator in $L^2(\mu)$ and $C_\A^*=W_{\hat\A^{-1}, \hsf_{\hat\A}}$, where $\hat\A^{-1}$ and $\hat\A$ are any $\aa$-measurable extensions of $\A^{-1}$ and $\A$, respectively, onto $X$.

Note that there are transformations, which are invertible and measurable but not nonsingular.
\begin{exa}
Consider $X=\R^\infty$, $\aa=\bor{\R^\infty}$ and $\mu=\munfty$, where $\munfty$ denotes gaussian measure on $\R^\infty$ (see the next Section for details). Let $\A\colon\R^\infty\to \R^\infty$ be given by $\A\{x_n\}_{n=1}^\infty=\big\{\frac1n x_n\big\}_{n=1}^\infty$. It is clear that $\A$ is invertible and measurable. However, $\A$ cannot be nonsingular since $\A^{-1}$ transforms $\ell^2(\N)$ into $\ell^2\big(\{1/n^2\}_{n=1}^\infty\big)$ and, as we know by \cite[Lemma 11 and Theorem 1.3, page 92]{be-ko}, $\munfty\Big(\ell^2(\N)\Big)=0$ while $\munfty\Big(\ell^2\big(\{1/n^2\}_{n=1}^\infty\big)\Big)=1$.
\end{exa}

\section{Composition operators with matrix symbols}
Let $\kappa\in\N$. The {\em $\kappa$-dimensional gaussian measure} is the measure $\MUN{\kappa}$ given by
\begin{align*}
\D\MUN{\kappa}=\frac{1}{(\sqrt{2\pi})^\kappa}\, \exp\bigg({-\frac{x_1^2+\ldots +x_\kappa^2}{2}}\bigg) \D\M_\kappa,
\end{align*}
where $\M_\kappa$ denotes the $\kappa$-dimensional Lebesgue measure on $\R^\kappa$. For any linear transformation $\A$ of $\R^\kappa$, the composition operator $C_\A$ in $L^2(\MUN{\kappa})$ is well-defined if and only if $\A$ is invertible. If this is the case, then (cf.\ \cite[equation (2.1)]{sto})
   \begin{align} \label{pochodna}
    \hsf_\A (x) =|\det \A^{-1}|\cdot \exp\frac{|x|^2-|\A^{-1}x|^2}{2}, \quad x \in \R^\kappa.
   \end{align}
Here, and later on, $|\cdot|$ stands\footnote{For simplicity we use the same dimension-independent symbol for any of the Euclidean norms on $\R^n$, $n\in\N$.} for the Euclidean norm on $\R^n$, $n\in\N$. Combining \eqref{pochodna} with \eqref{dense} and \cite[Proposition 6.2]{b-j-j-s-ampa}, we get:
\begin{align}\label{choinka}
\begin{minipage}{85ex}
{\em Every invertible linear transformation $\A$ of $\R^\kappa$ induces densely defined and injective composition operator $\ca$ in $L^2(\MUN{\kappa})$.}
\end{minipage}
\end{align}
Cosubnormality of $\ca$ can be written in terms of the symbol $\A$ (cf. \cite[Theorem 2.5]{sto} and \cite[Theorem 3.8]{b-d-p-matrix-subn}).
\begin{thm}\label{szarlotka}
Let $\A$ be an invertible linear transformation of $\R^\kappa$. If $\A$ is normal in $(\R^\kappa, |\cdot|)$, then $\ca$ is cosubnormal. The reverse implication holds whenever $\ca$ is bounded on $L^2(\MUN{\kappa})$.
\end{thm}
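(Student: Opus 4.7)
The plan is to split the theorem into its two implications and to reduce each to a low-dimensional computation.

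For the forward direction, I would use the real spectral theorem to write $\A=U\tilde\A U^{T}$, where $U$ is orthogonal and $\tilde\A=\mathrm{diag}(\A_1,\ldots,\A_s)$ is block-diagonal with each $\A_i$ either a nonzero real scalar or a $2\times 2$ scaled-rotation block $\rho_i R_{\theta_i}$. Because $\MUN{\kappa}$ is $O(\kappa)$-invariant, $C_U$ is unitary on $L^2(\MUN{\kappa})$ and $C_\A=C_U C_{\tilde\A} C_U^{*}$; Fubini identifies $L^2(\MUN{\kappa})$ with $\bigotimes_{i=1}^{s} L^2(\MUN{k_i})$ in a way that sends $C_{\tilde\A}$ to $\bigotimes_{i=1}^{s} C_{\A_i}$. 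Since cosubnormality is preserved under unitary equivalence and under tensor products of closed densely defined operators, it suffices to prove the statement for $\kappa=1$ (multiplication by a nonzero real scalar) and $\kappa=2$ (multiplication by a nonzero complex scalar after identifying $\R^2\cong\C$). In each case $C_\A^{*}$ admits an explicit normal extension, obtained by diagonalising the action of $C_\A^{*}$ on the Hermite basis of $L^2(\MUN{1})$ or on its complex counterpart for $L^2(\MUN{2})$.

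For the converse, boundedness of $C_\A$ makes $\hsf_\A$ essentially bounded, and cosubnormality makes $C_\A^{*}$ a bounded subnormal operator. Halmos's moment criterion then gives the positivity of $\sum_{i,j=0}^{n}\langle(C_\A^{*})^i f_j,(C_\A^{*})^j f_i\rangle$ for every $n$ and every finite family $(f_i)$ in $L^2(\MUN{\kappa})$. Writing out $(C_\A^{*})^i$ by \eqref{adjoint} and inserting the explicit form \eqref{pochodna} of $\hsf_{\A^i}$ produces, after substituting characteristic functions of small balls around a point $x$, a nonnegative $(n+1)\times(n+1)$ matrix whose entries are gaussian exponentials in $x$ determined by the orbits of $\A$ and $\A^{-1}$. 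A Taylor expansion of these entries at $x=0$ isolates a quadratic-in-$x$ constraint equivalent to $\A^{T}\A=\A\A^{T}$, i.e., to the normality of $\A$.

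The main obstacle I foresee is the forward direction's tensor-product reduction: for unbounded operators with non-matching domains, verifying that $C_{\tilde\A}$ really equals $\bigotimes_{i=1}^{s} C_{\A_i}$ on the correct domain, and that cosubnormality is stable under this operation, requires genuine care. The normal-extension construction in the scalar cases $\kappa\in\{1,2\}$ is essentially \cite[Theorem 2.5]{sto}, while the tensor-product reduction is in the spirit of \cite[Theorem 3.8]{b-d-p-matrix-subn}.
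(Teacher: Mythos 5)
First, a point of reference: the paper does not prove Theorem \ref{szarlotka} at all --- it is imported wholesale from \cite[Theorem 2.5]{sto} and \cite[Theorem 3.8]{b-d-p-matrix-subn}. So the comparison can only be between your sketch and those external arguments, and on its own terms your sketch has two genuine gaps.

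The more concrete one is in the base case of the forward direction. The composition operator $C_{\A}f=f\circ\A$ with $\A x=\alpha x$ is \emph{not} diagonalised by the Hermite basis of $L^2(\MUN{1})$: from the generating function $\sum_n H_n(x)t^n/n!=\exp(tx-t^2/2)$ one gets
\begin{equation*}
H_n(\alpha x)=\sum_{m+2j=n}\frac{n!}{m!\,j!}\,\alpha^m\Big(\frac{\alpha^2-1}{2}\Big)^{j}H_m(x),
\end{equation*}
so $C_{\A}$ is merely triangular with diagonal $\{\alpha^n\}_n$. The operator that the Hermite basis does diagonalise is the Mehler (Ornstein--Uhlenbeck) transform $f\mapsto\int f(\alpha x+\sqrt{1-\alpha^2}\,y)\,d\MUN{1}(y)$, which is a different object. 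Consequently the ``explicit normal extension'' of $C_{\A}^*$ in the cases $\kappa\in\{1,2\}$, which is the engine of your whole forward argument, is not actually produced; you would have to replace it by Stochel's original construction (which goes through the moment-sequence structure of $\{\hsf_{\A^n}(x)\}_n$ and the weighted composition operator $W_{\A^{-1},\hsf_{\A}}$ of \eqref{adjoint}) or by the inductive-limit/consistency-condition route of \cite{b-d-p-matrix-subn}.

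The second gap is the one you yourself flag, and it is not merely technical. Even granting cosubnormality of each block, you need a normal operator extending $C_{\tilde\A}^*$, not just extending the algebraic tensor product $\bigotimes_i C_{\A_i}^*$. In the unbounded case $C_{\tilde\A}^*$ can a priori be strictly larger than the closure of that algebraic tensor product, so exhibiting $\bigotimes_i N_i$ as a normal extension of the latter does not finish the argument; one must prove that the algebraic tensor product domain is a core for $C_{\tilde\A}^*$ (compare the core machinery of Proposition \ref{rdzen2} and Lemma \ref{adjointpowers}, which the paper develops precisely because such domain identifications do not come for free --- see the example following Lemma \ref{adjointpowers} where $(C_{\A}^*)^2\subsetneq C_{\A^2}^*$). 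The converse direction is closer to being right in spirit --- boundedness plus cosubnormality gives cohyponormality, and testing $\|C_{\A}f\|\Le\|C_{\A}^*f\|$ on localized functions via \eqref{pochodna} does force normality of $\A$ --- but as written (``a Taylor expansion \dots isolates a quadratic-in-$x$ constraint'') it is an announcement of a computation rather than the computation, and it silently uses only the $n=1$ case of the Bram--Halmos positivity, which should be said explicitly.
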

The {\em gaussian measure $\munfty$ on $\R^\infty$} is the (infinite) tensor product measure
\begin{align*}
\munfty=\MUN{1}\otimes \MUN{1}\otimes\MUN{1}\otimes\ldots
\end{align*}
defined on $\bor{\R^\infty}$. Recall that $\bor{\R^\infty}$ is generated by {\em cylindrical sets}, i.e., sets of the form $\sigma\times\R^\infty$, with $\sigma\in\bor{\R^n}$ and $n\in\N$; the family of all cylindrical sets will be denoted by $\borc{\R^\infty}$. For every $n\in\N$, the space $L^2(\MUN{n})$ can be naturally embedded into $L^2(\munfty)$ via the isometry $\varDelta\colon L^2(\MUN{n})\ni f\mapsto f\circ \pi_n\in L^2(\munfty)$. For simplicity we suppress the explicit dependence on $n$ in the notation. We will denote by symbol $\|\cdot\|$ any of the $L^2$-norm on $L^2(\MUN{n})$, $n\in\N$, or $L^2(\munfty)$. A function $f\in L^2(\munfty)$ is called a {\em cylindrical} function if $f\in \varDelta \Big(L^2(\MUN{n})\Big)$ for some $n\in\N$. If $f\in L^2(\munfty)$ is a cylindrical function and $f=\varDelta \widetilde f$, with $\widetilde f\in L^2(\MUN{k})$ and $k\in\N$, then $\varSigma_l(f)$, $l\Ge k$, denotes the set $\{x\in\R^k\colon \widetilde f(x)\neq0\}\times \R^{l-k}$.

We are interested in properties of composition operators with symbols induced by infinite matrices. Such operators were investigated in \cite[Corollary 5.1]{bud-pla-s2}, where tractable criteria for dense definiteness in case of row-finite matrices were given. In this paper we need to relax our approach slightly and consider also non row-finite case. This can be done as follows. We take a matrix $\a=(a_{ij})_{i,j\in\N}$ with real entries and we set
\begin{align*}
\dom_\a=\Big\{\{x_{n}\}_{n=1}^\infty \in\R^\infty\colon \sum_{j=1}^\infty a_{ij}x_j\in\R \text{ for every }i\in\N\Big\}.
\end{align*}
Using the Cauchy condition we see that
\begin{align*}
\dom_\a=\bigcap_{i\in\N}\bigcap_{l\in\N}\bigcup_{N\in\N}\bigcap_{m\geqslant n \geqslant N} \Big\{\{x_k\}_{k=1}^\infty\colon \big| \sum_{j=n}^m a_{ij}x_j\big|<\tfrac{1}{l}\Big\},
\end{align*}
which yields $\dom_\a\in\bor{\R^\infty}$. Now, if $\A\colon \dom\to \R^\infty$, with $\dom\in\bor{\R^\infty}$ such that $\dom\subseteq \dom_\a$, satisfies
\begin{align*}
\A(x_1,x_2,\ldots)=\bigg(\sum_{j=1}^\infty a_{1j}\,x_j,\sum_{j=1}^\infty a_{2j}\,x_j,\ldots\bigg),\quad \{x_n\}_{n=1}^\infty\in \dom,
\end{align*}
then we say that {\em$\A$ is induced by $\a$}, or that {\em $\a$ induces $\A$}. Such an $\A$ is $\bor{\R^\infty}$-measurable. Indeed, this follows from the fact that sets of the form $\R^{\kappa-1}\times(\alpha,\beta)\times\R^\infty$, with $\kappa\in\N$ and $\alpha,\beta\in\R$, generate $\bor{\R^\infty}$ and
\begin{align*}
\A^{-1}\big( \R^{\kappa-1}\times(\alpha,\beta)\times\R^\infty \big)
= \bigcup_{N\in\N}\bigcap_{m \geqslant N} \Big\{\{x_n\}_{n=1}^\infty\in \dom\colon \alpha<\big| \sum_{j=1}^m a_{\kappa j}x_j\big|<\beta\Big\}.
\end{align*}
The operator $\ca$ is defined according to the scheme of defining composition operators with partial symbols (see the previous Section).

It is obvious that $\dom_\a$ is a set of full measure $\munfty$ for every row-finite matrix $\a$. As shown below, if $\a$ is not row-finite, but entries in each row have appropriate asymptotics, then one can deduce a similar result.
\begin{lem}\label{exponential}
Let $\a=(a_{ij})_{i,j\in\N}$ be a real matrix. If there exist $\lambda\in(0,1)$ and $C>0$ such that $|a_{i,j}|\Le C \lambda^{|i-j|}$ for all $i,j\in\N$, then $\dom_\a$ is a set of full measure $\munfty$.
\end{lem}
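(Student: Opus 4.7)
The plan is to recognize that under $\munfty$ the coordinate maps $\R^\infty\ni\{x_n\}_{n=1}^\infty\mapsto x_n\in\R$ form an i.i.d.\ sequence of standard normal random variables (by the very definition of the tensor product measure), and therefore the question of whether $\dom_\a$ has full measure reduces to a classical question about almost sure convergence of series of independent centered gaussian random variables.

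First, I would fix $i\in\N$ and bound the sum of variances of the individual terms. By the hypothesis,
\begin{align*}
\sum_{j=1}^\infty a_{ij}^2 \Le C^2\sum_{j=1}^\infty \lambda^{2|i-j|} \Le C^2\sum_{k=-\infty}^{\infty}\lambda^{2|k|}=C^2\cdot\frac{1+\lambda^2}{1-\lambda^2}<\infty.
\end{align*}
This bound is uniform in $i$, which is convenient although not strictly needed. Next, for fixed $i$ consider the partial sums $S_N^{(i)}(x)=\sum_{j=1}^N a_{ij}x_j$. The summands $a_{ij}x_j$ are independent centered gaussian random variables with variances $a_{ij}^2$, and the preceding estimate shows that the series of variances converges. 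Hence by the classical Kolmogorov convergence criterion for sums of independent centered random variables (equivalently, Lévy's theorem, or simply the martingale convergence theorem applied to the Cauchy $L^2$-sequence $\{S_N^{(i)}\}_{N\in\N}$), the sequence $\{S_N^{(i)}\}_{N\in\N}$ converges $\munfty$-a.s.\ to a finite real limit. Let $A_i\in\bor{\R^\infty}$ denote the set of convergence; then $\munfty(A_i)=1$.

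Finally, observe that by definition
\begin{align*}
\dom_\a=\bigcap_{i\in\N} A_i,
\end{align*}
so $\dom_\a$ is a countable intersection of sets of full $\munfty$-measure and therefore itself has full $\munfty$-measure.

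The only genuine point of care is the application of the convergence theorem: one must verify that a.s.\ convergence of the partial sums $S_N^{(i)}$ is exactly the condition encoded in the definition of $\dom_\a$, but this is immediate from the way $\dom_\a$ was written out using the Cauchy criterion in the main text. No other obstacle arises; the exponential decay hypothesis is used only to ensure the summability of variances, and any hypothesis giving $\sum_j a_{ij}^2<\infty$ for every $i$ would suffice.
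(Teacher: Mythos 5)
Your proof is correct, but it takes a genuinely different route from the paper. The paper's argument is deterministic: by the Cauchy--Schwarz inequality the weighted space $\ell^2\big(\{\lambda^n\}_{n=1}^\infty\big)$ is contained in $\dom_\a$ (the exponential decay of the entries makes each row an element of the dual weighted space, so every series $\sum_j a_{ij}x_j$ converges absolutely there), and then one cites the Berezansky--Kondratiev result -- used repeatedly elsewhere in the paper -- that this weighted $\ell^2$ space has full measure $\munfty$. You instead exploit the probabilistic structure: the coordinates are i.i.d.\ standard gaussians, the row-wise variance sums $\sum_j a_{ij}^2$ are finite by the decay hypothesis, and Kolmogorov's one-series criterion gives almost sure convergence of each row's partial sums; a countable intersection finishes the job. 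Your identification of $\dom_\a$ with $\bigcap_i A_i$ is exactly what the Cauchy-criterion rewriting of $\dom_\a$ in the text encodes, so there is no gap there. The trade-off: your argument is more general (as you note, any hypothesis yielding $\sum_j a_{ij}^2<\infty$ for each row suffices, and no single explicit full-measure set need be exhibited), but it only gives conditional convergence of the series and imports a probabilistic convergence theorem; the paper's argument gives absolute convergence on a concrete full-measure subset and stays within the toolkit (weighted $\ell^2$ spaces of full gaussian measure) already deployed in the surrounding examples.
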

\begin{proof}
By the Cauchy-Schwartz inequality $\ell^2\big(\{\lambda^n\}_{n=1}^\infty\big)\subseteq \dom_\a$. Since, by \cite[Lemma 11 and Theorem 1.3, page 92]{be-ko}, $\munfty\Big(\ell^2\big(\{\lambda^n\}_{n=1}^\infty\big)\Big)=1$ we get the claim.
\end{proof}

Our focus will be on composition operators with block 3-diagonal (or, using another language, block 1-banded) matrix symbols. For this we need some extra terminology. Let $\s=\{s(n)\}_{n=1}^\infty$  be an increasing sequence of natural numbers. Let $\a=(a_{ij})_{i,j\in\N}$ be an infinite matrix such that
\begin{align}\label{rc-finite}
a_{ij}=a_{ji}=0,\ (i,j)\in\{s(n-1)+1,\ldots, s(n)\}\times\{s(n+1)+1,\ldots\}, n\in\N,
\end{align}
with $s(0):=0$. Then, for $p,q\in \N$, we define  matrices $\a_{p}=\a_{p}^\s$ and $\a_{pq}=\a_{pq}^\s$ by\allowdisplaybreaks
\begin{align*}
&\a_{p}=(a_{ij})_{i,j=1}^{s(p)}, &&\a_{pp}=(a_{ij})_{i,j=s(p-1)+1}^{s(p)}, \\
&\a_{p+1,p}=(a_{ij})_{i=s(p)+1, j=s(p-1)+1}^{s(p+1), s(p)}, &&\a_{p,p+1}=(a_{ij})_{i=s(p-1)+1, j=s(p)+1}^{s(p), s(p+1)},
\end{align*}
and
\begin{align*}
\a_{pq}=0,\quad |p-q|>1.
\end{align*}
With this notation $\a$ is indeed a block 3-diagonal matrix, i.e.,
\begin{align*}
\a=\left(
  \begin{array}{cccccc}
    \a_{11} & \vline &\a_{12}& \vline & 0 & \cdots \\
    \cline{1-5}
    \a_{21} & \vline &\a_{22}& \vline & \a_{23} & \cdots \\
    \cline{1-5}
    0 & \vline &\a_{32}& \vline & \a_{33} & \cdots \\
    \cdots &  &\cdots & & \cdots & \cdots \\
  \end{array}
\right).
\end{align*}
If additionally to \eqref{rc-finite}, the matrix $\a$ satisfies
\begin{align*}
\rank \a_{p,p+1}\in \big\{0, s(p)-s(p-1)\big\}, \quad p\in\N,
\end{align*}
then we say that $\a$ belongs to the class $\mathfrak{F}(\s)$.
\section{Criteria and examples}
In this section we propose criteria answering the question of when a composition operator $\ca$ in $L^2(\munfty)$ induced by an infinite matrix is of class $\EuScript{S}_{n,r}^*$. The first (see Theorem \ref{cosubgauss-} below) is rather general in nature, while the second and third (see Propositions \ref{zale-szefa} and  \ref{cosubgauss-positive}) are more concrete, enabling us to construct explicit examples.
\begin{thm}\label{cosubgauss-}
Let $n,r\in\Z_+$. Let $\s=\{s(k)\}_{k=1}^\infty\subseteq \N$ be an increasing sequence. Suppose that
\begin{enumerate}
\item[(i)] $\dom\in\bor{\R^\infty}$ satisfies $\munfty(\dom)=1$,
\item[(ii)] $\A\colon \dom\to \A(\dom)\subseteq\R^\infty$ is a $\bor{\R^\infty}$-measurable nonsingular and invertible transformation such that $\munfty\big(\A(\dom)\big)=1$, $\A^{-1}$ is $\bor{\R^\infty}$-measurable and nonsingular,
\item[(iii)] there exist $\kappa\in\N$ and $\{\varOmega_{l}\colon l\in\N\}\subseteq \bor{\R^{s(\kappa)}}$ such that  $\varOmega_{l}\nearrow \R^{s(\kappa)}$ as $l\to\infty$ and $\chi_{\varOmega_{k}\times\R^\infty}\hsf_{\A^i}\in L^2(\munfty)$ for all $k\in \N$ and $i\in I_{n+r}$.
\item[(iv)] for every $k\in\N$ there exists a $\bor{\R^{s(k)}}$-measurable nonsingular and invertible transformation $\A_k$ of $\R^{s(k)}$ such that $\A_k^{-1}$ is $\bor{\R^{s(k)}}$-measurable nonsingular, $C_{\A_k}$ is densely defined and cosubnormal in $L^2(\MUN{s(k)})$,
\item[(v)] for all $i\in I_{n+r}$ and $k\in\N$,
\begin{align}\label{glod}
\limsup_{l\to\infty}\int_{\R^{s(l)}}\chi_{\varOmega_k\times\R^{s(l)-s(\kappa)}}\hsf_{\A_{l}^i}^2\D\MUN{s(l)}\Le\|\chi_{\varOmega_k\times\R^\infty}\hsf_{\A^i}\|^2,
\end{align}
\item[(vi)] for every $m\in\N$ there exists $\widetilde p\in\N$ such that for all $i\in I_{n+r}$ and $\sigma\in\bor{\R^m}$ we have
\begin{align}\label{30stopni-1}
\munfty\bigg(\Big(\A^{-i}(\sigma\times \R^{\infty})\Big)\triangle \Big(\A^{-i}_{p}(\sigma\times \R^{s(p)-m})\times \R^\infty\Big)\bigg)=0, \quad p\Ge \widetilde p.
\end{align}
\item[(vii)] for every $m\in\N$, $\sigma\in\bor{\R^m}$ and $i\in I_{n+r}$ we have
\begin{align}\label{3studentki}
\munfty\bigg(\Big(\A^i(\sigma\times\R^\infty)\Big)\triangle \Big(\bigcup_{k=1}^\infty\bigcap_{p\Ge k} \A_p^i(\sigma\times \R^{s(p)-m})\times \R^\infty \Big)  \bigg)=0.
\end{align}
\end{enumerate}
Then $\ca\in\EuScript{S}_{n,r}^*$.
\end{thm}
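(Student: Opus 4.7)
My plan is a three-stage approximation argument: (1) reduce verification of \eqref{n3} for $C_\A^*$ to a dense family of cylindrical test vectors; (2) transfer the resulting inner products to their finite-dimensional analogues for $C_{\A_N}^*$ on $L^2(\MUN{s(N)})$; and (3) pass to the limit $N\to\infty$. Hypothesis (iv) makes each $C_{\A_N}$ cosubnormal, so Proposition \ref{chsub}(i) yields $C_{\A_N}^*\in\EuScript{S}_{n,r}$ and hence the inequality \eqref{n3} at each finite level; hypotheses (v)--(vii) will be used to ensure that this inequality survives the limit transition.

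For the reduction, I would combine \eqref{adjoint} with Lemma \ref{adjointpowers}(i) to write $\dom((C_\A^*)^{n+r})=\bigcap_{i=1}^{n+r}\dom(C_{\A^i}^*)$, on which each $(C_\A^*)^i$ acts as the weighted composition operator $W_{\A^{-i},\hsf_{\A^i}}$. Hypothesis (iii) is precisely what is needed to place a rich collection of cylindrical indicator functions into this intersection. Applying Proposition \ref{rdzen2} (in its partial-transformation variant) to the tuple $(W_{\A^{-i},\hsf_{\A^i}})_{i=1}^{n+r}$ together with the intersection-closed family of cylindrical sets exhausted by $\{\varOmega_l\times\R^\infty\}_{l\geq \kappa}$, one obtains a core consisting of linear combinations of such cylindrical indicators. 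Hence it suffices to verify \eqref{n3} for $f_i^k=\tilde f_i^k\circ\pi_{s(L)}$ with a common base $L\geq\kappa$ and $\tilde f_i^k$ supported in $\varOmega_L$.

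For the transfer, fix such cylindrical test vectors and lift each $\tilde f_i^k$ canonically to $F_i^{k,N}\in L^2(\MUN{s(N)})$ for $N\geq L$. Expanding both
\[
\is{(C_\A^*)^{p+k}f_i^l}{(C_\A^*)^{q+l}f_j^k}_{L^2(\munfty)}\quad\text{and}\quad\is{(C_{\A_N}^*)^{p+k}F_i^{l,N}}{(C_{\A_N}^*)^{q+l}F_j^{k,N}}_{L^2(\MUN{s(N)})}
\]
as integrals of the densities $\hsf_{\A^{\cdot}}$ (resp.\ $\hsf_{\A_N^{\cdot}}$) against characteristic-function weights supported on $\A^{\pm i}$-images (resp.\ $\A_N^{\pm i}$-images) of the supports of the $\tilde f_i^k$, hypotheses (vi)--(vii) equate these set-theoretic objects modulo $\munfty$-null sets for all sufficiently large $N$, while \eqref{glod} supplies the necessary $L^2$-control. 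A dominated/Fatou-type limit argument then transfers the finite-level inequality coming from $C_{\A_N}^*\in\EuScript{S}_{n,r}$ to the target inequality \eqref{n3} for $C_\A^*$.

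The main obstacle will be this transfer step. Hypotheses (vi)--(vii) yield only null-set agreement of $\A^{\pm i}$-images of cylinders with their finite-level counterparts, whereas the inner products are weighted by the unbounded densities $\hsf_{\A^i}$, so the set-theoretic matches must be converted into convergence of weighted $L^2$-integrals. This requires combining them with the upper-semicontinuity bound \eqref{glod} restricted to cylinders in $\varOmega_k$, and exhausting over $k$ via the sequence $\{\varOmega_l\}$. Orienting the Fatou-type inequalities correctly, so that the nonnegativity of the finite-level quadratic forms --- a linear combination with possibly signed coefficients $a_{p,q}^{i,j}$ --- passes in the desired direction to the limit, is the delicate technical point and is precisely the reason for the asymmetric form of \eqref{glod}.
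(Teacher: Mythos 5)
Your overall architecture matches the paper's: reduce to a core of cylindrical indicator functions via Proposition \ref{rdzen2} applied to the tuple $(W_{\A^{-i},\hsf_{\A^i}})_{i=1}^{n+r}$, invoke Proposition \ref{chsub}(i) to get $C_{\A_N}^*\in\EuScript{S}_{n,r}$ at each finite level, and pass to the limit. But the limit transition as you describe it has a genuine gap, and you have in fact named it yourself without resolving it. A Fatou/dominated-convergence argument applied termwise cannot transfer the inequality \eqref{n3}, because the quadratic form $\sum a_{p,q}^{i,j}\is{\cdot}{\cdot}$ has signed coefficients: one-sided semicontinuity of the individual inner products gives you inequalities pointing in \emph{both} directions once the signs of the $a_{p,q}^{i,j}$ are mixed, so nonnegativity of the finite-level forms does not pass to the limit. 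The hypothesis \eqref{glod} alone is only an upper bound and cannot fix the orientation problem.

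What is missing is the step that upgrades \eqref{glod} to \emph{exact convergence} of every inner product, which is how the paper circumvents the sign issue entirely. Concretely, the paper first proves a duality estimate $|\is{C_{\A^i}f}{\chi_\omega}|\Le\|f\|\,\|\chi_{\A_l^i(\cdots)\cap\varSigma_{s(l)}(f)}\hsf_{\A_l^i}\|_{L^2(\MUN{s(l)})}$ for cylindrical $f$ (using (vi)), and from it derives the reverse inequality $\|\chi_{\varOmega_k\times\R^\infty}\hsf_{\A^i}\|\Le\limsup_l\|\chi_{\varOmega_k\times\R^{s(l)-s(\kappa)}}\hsf_{\A_l^i}\|$. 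Combined with (v) this gives convergence of the $L^2$-norms along a subsequence; together with the weak convergence supplied by (vi) (tested against cylindrical step functions), norm convergence yields \emph{strong} $L^2(\munfty)$ convergence $\chi_{\varOmega_k\times\R^\infty}\varDelta\hsf_{\A_{l_j}^i}\to\chi_{\varOmega_k\times\R^\infty}\hsf_{\A^i}$ after a diagonal extraction over $i$ and $k$. Then, using (vii) for the compositions, one gets $S^i\varDelta x=\lim_k S_k^i\varDelta x$ in norm for every $x$ in the test family, hence $\is{S^i\varDelta f}{S^j\varDelta g}=\lim_k\is{S_k^i\varDelta f}{S_k^j\varDelta g}$ exactly, and the finite-level inequalities pass to the limit with no Fatou orientation to worry about. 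Without this lower-semicontinuity step (and the resulting strong convergence of the densities), your transfer step cannot be completed. A secondary imprecision: the test family should consist of indicators of sets $\sigma$ contained in $\bigcap_{i}\A^{-i}(\varOmega_{k_i}\times\R^\infty)$, not of functions supported in $\varOmega_L\times\R^\infty$; it is the forward images $\A^i(\sigma)$ that must sit inside the $\varOmega$-cylinders for condition (iii) to put $\chi_\sigma$ into $\dom(C_{\A^i}^*)$.
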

\begin{proof}
We divide the proof into few steps.\\
\noindent{\bf Step 1. }{\em For every $m\in\N$ there exists $\widetilde p\in\N$ such that for all $i\in I_{n+r}$ and $\omega=\widetilde\omega\times\R^\infty$, with $\widetilde \omega\in\bor{\R^\eta}$ for some $\eta\in\N$, and every $f=\varDelta \widetilde f\in\dom(C_{\A^i})$, with $\widetilde f\in L^2(\MUN{m})$, we have
\begin{align}\label{adj}
\big|\is{C_{\A^i} f}{\chi_{\omega}}\big|\Le \|f\| \bigg(\int_{\R^{s(l)}} \chi_{\A_{l}^i(\widetilde\omega\times \R^{s(l)-m})\cap\varSigma_{s(l)}(f)} \hsf_{\A_{l}^i}^2\D\MUN{s(l)}\bigg)^{1/2},\quad l\Ge \widetilde p.
\end{align}
}

Take $\sigma=\widetilde\sigma\times \R^{\infty}$ with $\widetilde \sigma\in\bor{\R^k}$ and $k\Ge \eta$. Then, by condition (vi), there exists $\widetilde p\in\N$ such that for all $p\Ge \widetilde p$  and $i\in I_{n+r}$ we have
\allowdisplaybreaks
\begin{align*}
\is{C_{\A^i} \chi_{\sigma}}{\chi_{\omega}}
&=\int_{\R^\infty} (\chi_{\sigma}\circ \A^i) \cdot \chi_{\omega}\D\munfty\\
&=\MUN{s(p)}\Big(\A_p^{-i} (\widetilde\sigma\times \R^{s(p)-k})\cap \widetilde\omega\times \R^{s(p)-\eta}\Big)\\
&=\int_{\R^{s(p)}}\chi_{\widetilde\sigma\times \R^{s(p)-k}}\circ \A_p^i\cdot\chi_{\widetilde\omega\times \R^{s(p)-\eta}}\D\MUN{s(p)}\\
&=\int_{\R^{s(p)}} \chi_{\widetilde\sigma\times \R^{s(p)-k}} \cdot \chi_{\A_p^i(\widetilde\omega\times \R^{s(p)-\eta})} \cdot  \hsf_{\A_p^i}\D\MUN{s(p)}\\
&\Le \|\chi_{\sigma}\| \bigg(\int_{\R^{s(p)}} \chi_{\widetilde\sigma\times \R^{s(p)-k}\cap \A_p^i(\widetilde\omega\times \R^{s(p)-\eta})} \hsf_{\A_p^i}^2\D\MUN{s(p)}\bigg)^{1/2},
\end{align*}
which means that \eqref{adj} holds with $f=\chi_\sigma$. If $f=\varDelta \widetilde f\in\dom(C_{\A^i})$, where $\widetilde f$ is a nonnegative step function, then arguing as above we see that
\begin{align*}
\is{C_{\A^i} f}{\chi_{\omega}}
&=\int_{\R^{s(p)}} \varDelta \widetilde f\,  \chi_{\A_p^i(\widetilde\omega\times \R^{s(p)-\eta})}\,   \hsf_{\A_p^i}\D\MUN{s(p)}\\
&\Le \|f\| \bigg(\int_{\R^{s(p)}} \chi_{\varSigma_{s(p)}(f)\cap \A_p^i(\widetilde\omega\times \R^{s(p)-\eta})} \hsf_{\A_p^i}^2\D\MUN{s(p)}\bigg)^{1/2},\quad p\Ge \widetilde p,\ i\in I_{n+r}.
\end{align*}
In turn, using approximation by step functions, we deduce that \eqref{adj} holds for every $f\in \dom(C_{\A^i})$ that is nonnegative and $f=\varDelta\widetilde f$ with $\widetilde f\in L^2(\MUN{m})$. For every cylindrical $\C$-valued function $f\in\dom(C_{\A^i})$ its module is also a cylindrical function, $|f|\in\dom(C_{\A^i})$ and $\varSigma_{s(p)}(f)=\varSigma_{s(p)}(|f|)$. Hence, by the inequality
\begin{align*}
\big|\is{C_{\A^i} f}{\chi_{\omega}}\big|&\Le \is{C_{\A^i} |f|}{\chi_{\omega}},
\end{align*}
we get the claim.

\noindent{\bf Step 2. }{\em For all $i\in I_{n+r}$, $m\in\N$ and $\omega\in\bor{\R^m}$, if
\begin{align}\label{claim-2.5+}
\limsup_{l\to\infty} \int_{\R^{s(l)}} \chi_{\A_{l}^i(\omega\times \R^{s(l)-m})}\hsf_{\A_{l}^i}^2\D\MUN{s(l)}<+\infty,
\end{align}
then $\chi_{\omega\times\R^\infty}\in \dom(C_{\A^i}^*)$. Moreover, if $k\in\N$, $i\in I_{n+r}$ and $\sigma\in \bor{\R^\infty}$ satisfy $\sigma\subseteq \A^{-i}(\varOmega_k\times\R^\infty)$, then $\chi_{\sigma}\in \dom(C_{\A^i}^*)$.}

Fix $i\in I_{n+r}$. Let $m\in\N$ and $\omega\in\bor{\R^m}$. Then, applying Step 1, inequality \eqref{claim-2.5+} and Proposition \ref{rdzen2}, we deduce that $\chi_{\omega\times\R^\infty}\in \dom(C_{\A^i}^*)$.

Now fix $k\in\N$. It follows from condition (vi) that
\begin{align*}
\munfty\bigg(\Big(\A^{-i}(\varOmega_k\times\R^\infty)\Big)\triangle \Big(\A^{-i}_p(\varOmega_k\times\R^{s(p)-s(\kappa)})\times \R^\infty\Big)\bigg)=0,\quad p\Ge\widetilde p,
\end{align*}
and thus, by condition (v), we have
\begin{multline*}
\limsup_{l\to\infty} \int_{\R^{s(l)}}\chi_{\A^i_l(\A^{-i}_l(\varOmega_k\times\R^{s(l)-s(\kappa)}))}\hsf_{\A_{l}^i}^2\D\MUN{s(l)}\\
=\limsup_{l\to\infty}
\int_{\R^{s(l)}}\chi_{\varOmega_k\times\R^{s(l)-s(\kappa)}}\hsf_{\A_{l}^i}^2\D\MUN{s(l)}
<+\infty.
\end{multline*}
Hence, $\chi_{\A^{-i}(\varOmega_k\times\R^\infty)}\in \dom(C_{\A^i}^*)$ by the first part of the claim. Now, that $\chi_\sigma\in \dom(C_{\A^i}^*)$ follows easily from \eqref{adjoint} and the definition of the domain of a weighted composition operator.

\noindent{\bf Step 3. }{\em For all $i\in I_{n+r}$ and $k\in\N$, and $\{l_j\}_{j=1}^\infty\subseteq \N$ such that $\lim_{j\to\infty}l_j=\infty$, we have
\begin{align}\label{liminf}
\|\chi_{\varOmega_k\times\R^{\infty}} \hsf_{\A^i}\|\Le\limsup_{j\to\infty}\|\chi_{\varOmega_k\times\R^{s(l_j)-s(\kappa)}} \hsf_{\A_{l_j}^i}\|.
\end{align}}

Fix $i\in I_{n+r}$ and $k\in\N$. By Step 2, $\chi_{\A^{-i}(\varOmega_k\times\R^\infty)}\in \dom(C_{\A^i}^*)$, and thus using \eqref{adjoint} we get
\begin{align}\label{step4+}
\is{f}{\chi_{\varOmega_k\times\R^{\infty}} \hsf_{\A^i}}
=\is{f}{C_{\A^i}^* \chi_{\A^{-i}(\varOmega_k\times\R^{\infty})}}
=\is{C_{\A^i} f}{\chi_{\A^{-i}(\varOmega_k\times\R^{\infty})}},\quad f\in\dom(C_{\A^i}).
\end{align}
According to Lemma \ref{densepower}, equalities in \eqref{step4+} are satisfied for every cylindrical step function $f$. Moreover, for every cylindrical step function $f$, by condition (vi) and Step 1, we have
\begin{align*}
\big|\is{C_{\A^i} f}{\chi_{\A^{-i}(\varOmega_k\times\R^{\infty})}}\big|
\Le
\|f\| \limsup_{j\to\infty} \bigg(\int_{\R^{s(l_j)}} \chi_{\A_{l_j}^i(\A_{l_j}^{-i}(\varOmega_k\times \R^{s(l_j)-s(\kappa)}))} \hsf_{\A_{l_j}^i}^2\D\MUN{s(l_j)}\bigg)^{1/2},
\end{align*}
which together with \eqref{step4+} gives
\begin{align*}
\big|\is{f}{\chi_{\varOmega_k\times\R^{\infty}} \hsf_{\A^i}}\big|\Le \|f\|\limsup_{j\to\infty}\|\chi_{\varOmega_k\times\R^{s(l_j)-s(\kappa)}} \hsf_{\A_{l_j}^i}\|,
\end{align*}
for every cylindrical step function $f$. Since every function in $L^2(\munfty)$ may be approximated by cylindrical step functions, we get
\begin{align*}
\|\chi_{\varOmega_k\times\R^{\infty}} \hsf_{\A^i}\|^2\Le \| \chi_{\varOmega_k\times\R^{\infty}} \hsf_{\A^i}\|\limsup_{j\to\infty}\| \chi_{\varOmega_k\times\R^{s(l_j)-s(\kappa)}}\hsf_{\A_{l_j}^i}\|,
\end{align*}
which proves \eqref{liminf}.

\noindent{\bf Step 4. }{\em There exists an injective increasing sequence $\{l_j\}_{j=1}^\infty\subseteq \N$, such that for all $i\in I_{n+r}$ and $k\in\N$,
\begin{align}\label{limL2+}
\lim_{j\to\infty}\| \chi_{\varOmega_k\times\R^{\infty}} \big(\hsf_{\A^i}-\varDelta\hsf_{\A_{l_j}^i}\big)\|=0.
\end{align}}

First we prove \eqref{limL2+} with $i=k=1$. In view of \eqref{liminf} and (v) there exists an injective increasing sequence $\{l_j^{1,1}\}_{j=1}^\infty\subseteq \N$ such that
\begin{align*}
\lim_{j\to\infty}\| \chi_{\varOmega_1\times\R^{\infty}}\varDelta\hsf_{\A_{l_j^{1,1}}}\|=\| \chi_{\varOmega_1\times\R^{\infty}}\hsf_{\A}\|.
\end{align*}
Thus, by \cite[Exercise 4.21(a)]{wei}, it suffices to show that
\begin{align}\label{weakfulli=k=1}
\int_{\varOmega_1\times\R^{\infty}} f\hsf_{\A}\D \munfty=\lim_{j\to\infty} \int_{\varOmega_1\times\R^{\infty}} f\varDelta\hsf_{\A_{l_j^{1,1}}}\D\munfty,\quad  f\in L^2(\munfty).
\end{align}
By condition (vi), equality in \eqref{weakfulli=k=1} holds when $f$ is a cylindrical step function. Since cylindrical step functions are dense in $L^2(\munfty)$ and $\sup_{j\in\N}\|\chi_{\varOmega_1\times\R^{\infty}}\varDelta \hsf_{\A_{l_j^{1,1}}}\|_{L^2(\munfty)}<\infty$, we get \eqref{weakfulli=k=1} and consequently \eqref{limL2+} for $i=k=1$. Repeating the same argument $(n+r-1)$ times (apply Step 3 to consecutive subsequences), we show that there exists a subsequence $\{l_j^{n+r,1}\}_{j=1}^\infty\subseteq\{l_j^{1,1}\}_{j=1}^{\infty}$ such that \eqref{limL2+} is satisfied with $i=1,\ldots , n+r$ and $k=1$. In a similar manner we show that for any $k_0\in\N$ we can find subsequences $\{l_j^{n+r,k_0}\}_{j=1}^\infty\subseteq\ldots\subseteq\{l_j^{n+r,2}\}_{j=1}^\infty\subseteq\{l_j^{n+r,1}\}_{j=1}^\infty$ such that \eqref{limL2+} is satisfied with $i=1,\ldots , n+r$ and $k=1,\ldots,k_0$. Now, the sequence $\{l_j\}_{j=1}^\infty$ given by $l_j=l_j^{n+r,j}$ does the job.

\noindent{\bf Step 5. }{\em $\ca\in\EuScript{S}_{n,r}^*$.}

In view of Lemma \ref{densepower}, \eqref{adjoint} and \eqref{zgoda}, $\ca$ is densely defined in $L^2(\munfty)$ and $\ca^*=W_{\A^{-1}, \hsf_\A}$. In turn, by  \eqref{adjoint}, for every $k\in\N$, $C_{\A_{k}}^*=W_{\A_{k}^{-1}, \hsf_{\A_{k}}}$.
Set
\begin{align*}
S=W_{\A^{-1},\hsf_\A},\quad \text{and}\quad S_{k}=W_{\A_{l_k}^{-1}, \hsf_{\A_{l_k}}},\quad k\in\N,
\end{align*}
where $\{l_k\}_{k\in\N}$ is as in Step 4 with additional requirement that $l_1\Ge \kappa$.

We denote by $\bb$ the family of Borel subsets of $\R^\infty$ defined as follows: $\sigma\in\bb$ if and only if there exists $\widetilde\sigma\in\borc{\R^\infty}$ such that $\munfty(\sigma\triangle\widetilde\sigma)=0$ and either $\widetilde\sigma=\R^\infty$ or there exists $k_1,\ldots,k_{n+r}\in\N$ such that $\widetilde\sigma\subseteq \A^{-1}(\varOmega_{k_1}\times \R^\infty)\cap\ldots\cap \A^{-(n+r)}(\varOmega_{k_{n+r}}\times\R^\infty)$. According to condition (vi) and the fact that $\varOmega_l\nearrow \R^\kappa$ as $l\to\infty$ the family $\bb$ satisfies conditions (i)-(iii) of Proposition \ref{rdzen2}. Indeed, conditions (i) and (iii) are clear. For the proof of (ii) we take $m\in\N$ and $\sigma\in\bor{\R^m}$, and for every $k\in\N$ we consider the sets
\begin{align*}
\omega_k=\big(\sigma\times \R^\infty\big)\cap \Big(\A^{-1}\big(\varOmega_{k}\times\R^\infty\big)\cap \ldots\cap \A^{-(n+r)}\big(\varOmega_{k}\times\R^\infty\big)\Big).
\end{align*}
and
\begin{align*}
\widetilde\omega_{k,p}=\Big( \big(\sigma\times \R^{s(p)-m}\big)\cap \A^{-1}_p\big(\varOmega_{k}\times\R^{s(p)-m}\big)\cap\ldots \cap   \A^{-(n+r)}_p\big(\varOmega_{k}\times\R^{s(p)-m}\big)\Big)\times\R^\infty,\quad p\in\N.
\end{align*}
Then, using (vi), we deduce that $\munfty(\omega_k\triangle \widetilde\omega_{k,p})=0$ for sufficiently large $p\in\N$.
Hence for every $k,p\in\N$, $\omega_k\in\bb$ and $\widetilde\omega_{l,p}\nearrow \sigma\times\R^\infty$ as $l\to\infty$. This and the fact that $\borc{\R^\infty}$ generates $\bor{\R^\infty}$ prove (ii).

Let $\xx$ be a family composed of all characteristic functions $\chi_\sigma\in L^2(\munfty)$ of sets $\sigma\in\bb$. Then by Step 2 we have
\begin{align}\label{x}
\xx\subseteq\bigcap_{i=1}^{n+r}\dom(C_{\A^i}^*).
\end{align}
Thus, by Lemma \ref{adjointpowers}\,(i), $\xx\subseteq \dom(S^{n+r})$.

For $k\in\N$, let $\bb_k$ denote the family of sets $\sigma\in \bor{\R^{s(l_k)}}$ such that either $\sigma=\R^{s(l_k)}$, or $\sigma\subseteq \A_{l_k}^{-1}(\varOmega_{m_1}\times \R^{s(l_k)-s(\kappa)})\cap\ldots\cap \A_{l_k}^{-(n+r)}(\varOmega_{m_{n+r}}\times\R^{s(l_k)-s(\kappa)})$ for some $m_1, \ldots, m_{n+r}\in\N$. Applying condition (v), \eqref{adjoint} and Lemma \ref{adjointpowers}\,(i), we show that $\xx_k\subseteq \dom(S_k^{n+r})$ for every sufficiently large $k\in\N$, where $\xx_k$ is composed of all characteristic functions $\chi_\sigma\in L^2(\MUN{s(l_k)})$ of sets $\sigma\in\bb_k$.

By condition (vi), $\xx=\bigcup_{i=1}^\infty \bigcap_{k=i}^\infty \varDelta\xx_k$. In view of \eqref{x}, \eqref{adjoint} and Proposition \ref{rdzen2}, $\lin\xx$ is a core for $(C_{\A}^*, \ldots, C_{\A^{n+r}}^*)$. Thus, by Lemma \ref{adjointpowers}, $\lin\xx$ is a core for $(S, \ldots, S^{n+r})$. Notice that, by condition (vii), for any $x\in\xx_m$ with $m\in\N$, and $i\in I_{n+r}$, we have $(\varDelta x)\circ \A^{-i}=\lim_{p\to\infty} (\varDelta x)\circ \A_{p}^{-i}$. Hence, by Lemma \ref{adjointpowers} and Step 4, we have
\begin{align*}
S^i \varDelta x &=\hsf_{\A^i}\cdot \Big(\varDelta x\circ \A^{-i}\Big)=\lim_{k\to\infty} \varDelta \hsf_{\A_{l_k}^i}\cdot \Big(\varDelta x\circ \A_{l_k}^{-i}\Big)\\
&=\lim_{k\to\infty} S^i_k \varDelta x,\quad i\in I_{n+r}\text{ and }x\in\xx_m, m\in\N,
\end{align*}
which implies that
\begin{align}\label{kurczak}
\is{S^i \varDelta f}{S^j \varDelta g}=\lim_{k\to\infty}\is{S_k^i \varDelta f}{S_k^j \varDelta g},\quad  \text{for all $\varDelta f, \varDelta g\in \lin\xx$ and $i,j\in I_{n+r}$}.
\end{align}
By subnormality of $S_k$ and Proposition \ref{chsub} we have $S_k\in\EuScript{S}_{n,r}$ for every $k\in\N$. This and \eqref{kurczak} imply that inequality in \eqref{n3} (with $S$ in place of $T$) is satisfied for all $\{f_i^k\colon i=1,\ldots, m,\ k=0,\ldots, r\}\subseteq\xx$. This and the fact that $\lin\xx$ is a core for $(S, \ldots, S^{n+r})$ imply that $\ca\in\EuScript{S}_{n,r}^*$.
\end{proof}
\begin{prop}\label{zale-szefa}
Let $n,r\in\Z_+$. Let $\a=(a_{ij})_{i,j\in\N}\subseteq\R$ be a matrix such that $\munfty\big(\dom_\a\big)=1$, and let $\s=\{s(k)\}_{k=1}^\infty\subseteq \N$ be an increasing sequence. Assume that
\begin{enumerate}
\item[(a)] $\A\colon \dom_\a\to \A(\dom_\a)$, the transformation induced by $\a$, is nonsingular and invertible, and $\munfty\big(\A(\dom_\a)\big)=1$,
\item[(b)] $\A^{-1}$ is $\bor{\R^\infty}$-measurable and nonsingular and induced by a matrix $\a^{-1}\in\mathfrak{F}(\s)$,
\item[(c)] there exist $\kappa\in\N$ and $\{\varOmega_{l}\colon l\in\N\}\subseteq \bor{\R^{s(\kappa)}}$ such that  $\varOmega_{l}\nearrow \R^{s(\kappa)}$ as $l\to\infty$ and $\chi_{\varOmega_{k}\times\R^\infty}\hsf_{\A^i}\in L^2(\munfty)$ for all $k\in \N$ and $i\in I_{n+r}$,
\item[(d)] for every $k\in\N$, the matrix $(\a^{-1})_{k}$ is invertible and normal in $\big(\R^{s(k)},|\cdot|\big)$,
\item[(e)] for all $i\in I_{n+r}$ and $k\in\N$, inequality \eqref{glod} is satisfied, where $\A_{l}$ is the transformation of $\R^{s(k)}$ induced by the inverse of $(\a^{-1})_{l}$, $l\in\N$.
\end{enumerate}
Then $\ca\in\EuScript{S}_{n,r}^*$.
\end{prop}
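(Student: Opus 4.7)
My strategy is to derive the proposition by verifying the seven hypotheses (i)--(vii) of Theorem \ref{cosubgauss-} from the assumptions (a)--(e); once this is done, the conclusion $\ca \in \EuScript{S}_{n,r}^*$ follows directly. Setting $\dom := \dom_\a$, hypothesis (i) is supplied by $\munfty(\dom_\a) = 1$; hypothesis (ii) combines (a) with the first clause of (b); hypothesis (iii) is exactly (c); hypothesis (v) is exactly (e).

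For hypothesis (iv), fix $k \in \N$. By (d), $(\a^{-1})_k$ is an invertible normal matrix on $(\R^{s(k)}, |\cdot|)$. Since the inverse of an invertible normal matrix is normal, $\A_k$, induced by $((\a^{-1})_k)^{-1}$, is a linear invertible normal transformation of $\R^{s(k)}$, and is therefore automatically $\bor{\R^{s(k)}}$-measurable and nonsingular, as is $\A_k^{-1}$. Dense definiteness of $C_{\A_k}$ then follows from \eqref{choinka}, and cosubnormality from Theorem \ref{szarlotka}.

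The substantive work is the verification of (vi) and (vii), which concern the coincidence, modulo $\munfty$-null sets, of the infinite-dimensional transformations $\A^{\pm i}$ with their finite-dimensional truncations $\A_p^{\pm i}$ on cylindrical sets for $p$ large. The key structural input is that $\a^{-1} \in \mathfrak{F}(\s)$ is block 3-diagonal, whence $(\a^{-1})^i$ has block bandwidth $i$. Fix $\sigma \in \bor{\R^m}$ and $i \in I_{n+r}$ and choose $\widetilde p$ with $s(\widetilde p - i) \geq m$. A path-counting argument in the matrix product $((\a^{-1})^i)_{j,k} = \sum (\a^{-1})_{j,\ell_1}(\a^{-1})_{\ell_1,\ell_2}\cdots(\a^{-1})_{\ell_{i-1},k}$ shows that, for $p \geq \widetilde p$, $j \leq s(p-i)$, and $k \leq s(p)$, one has $((\a^{-1})^i)_{j,k} = (((\a^{-1})_p)^i)_{j,k}$. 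Consequently, the top $s(p-i)$ rows of the infinite system $(\a^{-1})^i w = y$ (solved by $w = \A^i(y)$) and of the truncated system $((\a^{-1})_p)^i w' = (y_1, \ldots, y_{s(p)})$ (solved by $w' = \A_p^i(y_1,\ldots,y_{s(p)})$) coincide. To pin down (vi) I would then invoke the rank condition $\rank (\a^{-1})_{p,p+1} \in \{0,\, s(p)-s(p-1)\}$ encoded in $\mathfrak{F}(\s)$, together with invertibility of each $(\a^{-1})_p$ from (d), to control the remaining $s(p)-s(p-i)$ rows of the truncated system and conclude that $w_1,\ldots,w_m = w'_1,\ldots,w'_m$ for $\munfty$-a.e.\ $y$, which translates into \eqref{30stopni-1}. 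Condition (vii) is handled analogously by running the argument forward on $\A^i$, with the $\bigcup_k \bigcap_{p \geq k}$ expression accommodating the fact that the truncated images $\A_p^i(\sigma \times \R^{s(p)-m})$ stabilize only modulo null sets as the truncation level grows.

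I expect the main obstacle to be precisely this step: reconciling the global and local inversions of the block-banded transformation, and showing that the ``boundary freedom'' introduced by the bottom block of the truncated system does not propagate into the first $m$ coordinates on a set of full gaussian measure. The rank condition built into $\mathfrak{F}(\s)$ is almost certainly the essential mechanism that eliminates this boundary contribution, and getting that bookkeeping right is where the real work of the proof lies.
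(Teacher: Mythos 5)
Your overall frame is the paper's: reduce to Theorem \ref{cosubgauss-} by checking hypotheses (i)--(vii), and your treatment of (i)--(v) --- including the observation that the inverse of an invertible normal matrix is normal, so (iv) follows from \eqref{choinka} and Theorem \ref{szarlotka} --- matches the paper exactly. The problem is that verifying (vi) and (vii) \emph{is} the entire content of the paper's proof of this proposition, and you have left precisely that step as an unexecuted sketch (``getting that bookkeeping right is where the real work of the proof lies''). Moreover, the one concrete observation you do offer --- that the top $s(p-i)$ rows of $(\a^{-1})^i$ and of $((\a^{-1})_p)^i$ coincide --- does not by itself get you anywhere near \eqref{30stopni-1}: both the infinite system and the truncated system are uniquely solvable, the bottom block of equations of the truncated system genuinely differs from the corresponding equations of the infinite one, and the resulting discrepancy is fed back into the first $m$ coordinates through $((\a^{-1})_p)^{-i}$, whose inverse is generically a full matrix even when $(\a^{-1})^i$ is block banded. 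So agreement of the top rows gives no control whatsoever on the first $m$ coordinates of the two solutions; some additional structural input is indispensable.

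The mechanism the paper actually uses is a dichotomy built into the definition of $\mathfrak{F}(\s)$, which your sketch does not identify. Either there is some $p_0$ with $s(p_0)\Ge m$ and $\rank(\a^{-1})_{p_0,p_0+1}=0$; then normality of the finite section $(\a^{-1})_{p_0+1}$ --- assumption (d) used a \emph{second} time, beyond its role in (iv) --- forces $\rank(\a^{-1})_{p_0+1,p_0}=0$ as well, so the matrix decouples into a direct sum and \eqref{30stopni-1} follows from surjectivity of $\A^{-1}$. Or else \eqref{pizza-1} holds for all relevant $p$, and one proves the telescoping image identity preceding \eqref{krzyki}, iterates it to get \eqref{krzyki}, and then applies \eqref{krzyki} repeatedly to pass to powers $i$. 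Condition (vii) is then obtained from the pointwise convergence $\chi_{\sigma\times\R^\infty}\circ\A^{-i}=\lim_{p\to\infty}\varDelta\big(\chi_{\sigma\times\R^{s(p)-m}}\circ\A_p^{-i}\big)$ rather than by ``running the argument forward'' as you suggest. Without the case split and the second use of normality, your plan for (vi)--(vii) is not a proof but a statement of what needs to be proved, so the proposal as written has a genuine gap at its central step.
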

\begin{proof}
We begin by showing that for every $m\in\N$ there exists $\widetilde p\in\N$ such that for all $i\in I_{n+r}$ and $\sigma\in\bor{\R^m}$ condition \eqref{30stopni-1} is satisfied. Indeed, fix $i\in I_{n+r}$, $m\in\N$ and $\sigma\in\bor{\R^m}$. Suppose that there exists $p_0\in\N$ such that
\begin{align*}
s(p_0)\Ge m\quad \text{and}\quad \rank (\a^{-1})_{p_0,p_0+1}=0.
\end{align*}
Set $\widetilde p=p_0$.  Then normality of $(\a^{-1})_{\widetilde p+1}$ implies that $\rank (\a^{-1})_{\widetilde p+1, \widetilde p}=0$. This and surjectivity of $\A^{-1}$ yield \eqref{30stopni-1}. Now, suppose that for every $p\in\N$ such that $s(p)\Ge m$ we have
\begin{align}\label{pizza-1}
\rank (\a^{-1})_{p, p+1}=s(p)-s(p-1).
\end{align}
It is easily seen that
\begin{align*}
\A^{-1}_{p}\Big(\widetilde \sigma\times \R^{s(p)-m}\Big)= \pi_{s(p)}^{s(p+1)} \circ \A^{-1}_{p+1} \Big(\widetilde \sigma\times \R^{s(p)-m}\times \{0\}\times \ldots\times \{0\}\Big),\quad p\Ge \widetilde p,
\end{align*}
where $\widetilde p$ is the smallest integer such that $s(\widetilde p-1)\Ge m$. This and \eqref{pizza-1} imply that
\begin{align}\label{krzyki}
\munfty \bigg(\A^{-1}\Big(\sigma\times \R^{\infty}\Big)\triangle \A^{-1}_{p}\Big(\widetilde \sigma\times \R^{s(p)-m}\Big)\times \R^\infty\bigg)=0,\quad p\Ge \widetilde p.
\end{align}
Using \eqref{krzyki} repeatedly we obtain \eqref{30stopni-1}.

Now, if $m\in\N$, $\sigma\in\R^m$, $i\in I_{n+r}$, then there exists $\widehat p\in \N$ such that for $\munfty$-a.e. $x\in\R^\infty$ we have
\begin{align*}
\Big(\chi_{\sigma\times \R^\infty}\circ \A^{-i}\Big) (x)=1&\Longleftrightarrow\Big(\big(\A^{-i}(x)\big)_1,\ldots,\big(\A^{-i}(x)\big)_m\Big)\in\sigma\\
&\Longleftrightarrow \Big(\big((\A_p^{-i}\circ \pi_p) (x)\big)_1,\ldots,\big((\A_p^{-i}\circ \pi_p) (x)\big)_m\Big)\in\sigma,\quad p\Ge \widehat p\\
&\Longleftrightarrow \varDelta\Big(\chi_{\sigma\times \R^{s(p)-m}}\circ \A_p^{-i}\Big)(x)=1,\quad p\Ge \widehat p
\end{align*}
This implies that $\chi_{\sigma\times \R^\infty}\circ \A^{-i}=\lim_{p\to\infty} \varDelta\Big(\chi_{\sigma\times \R^{s(p)-m}}\circ \A_p^{-i}\Big)$, which all proves that \eqref{3studentki} is satisfied.

Finally, since, by \eqref{choinka} and Theorem \ref{szarlotka}, $C_{\A_k}$ is densely defined and cosubnormal for every $k\in\N$, we can apply Theorem \ref{cosubgauss-}.
\end{proof}
\begin{rem}
Concerning Theorem \ref{cosubgauss-} it is worth noticing that, by Step 3, condition (iii) of Theorem \ref{cosubgauss-} is automatically satisfied if $\limsup_{l\to \infty}\|\hsf_{\A_{l}^i}\|<+\infty$, $i\in I_{n+r}$. 
\end{rem}
Below we provide examples of unbounded composition operators induced by infinite matrices that belong to $\EuScript{S}_{n,r}^*$. The first one, with diagonal matrix symbols, is motivated by \cite[Theorem 4.1]{sto}, where cosubnormality of bounded $\ca$'s of this kind was shown (by use of different methods).
\begin{exa}\label{diag-2}
Let $n,r\in\Z_+$. Let $\a=(a_{ij})_{i,j\in\N}\subseteq \R$ be a diagonal matrix, i.e. $a_{ii}=\alpha_i$ with $\{\alpha_m\}_{m=1}^\infty\subseteq(0,\infty)$, and $a_{ij}=0$ for all $i,j\in\N$ such that $i\neq j$. Assume that
\begin{align*}
0<\alpha_{k}<1\quad \text{for all $k\in\N$ such that $k> n+r$,}
\end{align*}
and
\begin{align}\label{absconv}
\text{$\sum_{k=1}^\infty \big(1-\alpha_k\big)$ is convergent.}
\end{align}
Clearly, $\a$ induces $\aa$-measurable invertible transformation $\A$ of $\R^\infty$ such that $\dom_\a$ is a set of full measure $\munfty$. Let $\s=\{s(k)\}_{k=1}^\infty\subseteq \N$ be given by $s(k)=k$. Then $\A^{-1}$ is $\aa$-measurable transformation induced by a matrix $\a^{-1}\in\frs(\s)$ such that $\dom_{\a^{-1}}$ is a set of full measure $\munfty$.  Let $\{\varOmega_k\colon k\in\N\}$ be given by $\varOmega_k=[-k,k]^{n+r}$. By \eqref{pochodna}, for all $i\in I_{n+r}$, we have\allowdisplaybreaks
\begin{align*}
\hsf_{\A_{l}^i}(x_1,\ldots,x_l)
&=\frac{1}{\alpha_1^i\cdots\alpha_l^i}\exp\frac{|(x_1,\ldots,x_l)|^2-\big|\big(\alpha_1^{-i}x_1,\ldots,\alpha_l^{-i} x_l\big)\big|^2}{2}\\
&=\prod_{j=1}^l\frac{1}{\alpha_j^i}\exp\Big(-\frac{x_j^2}{2}\frac{1-\alpha_j^{2i}}{\alpha_j^{2i}}\Big),\quad (x_1,\ldots,x_l)\in\R^l,\ l\in\N,
\end{align*}
where $\A_l$ is the transformation of $\R^l$ induced by the matrix $\a_l$, $l\in\N$. This together with the change-of-variable theorem (cf. \cite[Theorem 8.26]{rud}) and \eqref{absconv} implies that there is $C>0$ such that\allowdisplaybreaks
\begin{align*}
\|\chi_{\varOmega_k\times\R^{l-n-r}}\hsf_{A_{l}^i}\|_{L^2(\MUN{l})}^2 &
=\prod_{j=1}^{n+r} \frac{1}{\alpha_j^{2i}}\int_{-k}^k\mathrm{e}^{-\frac{x_j^2(1-\alpha_j^{2i})}{\alpha_j^{2i}}}\D\MUN{1}
\prod_{j=n+r+1}^l \frac{1}{\alpha_j^{2i}}\int_{-\infty}^\infty\mathrm{e}^{-\frac{x_j^2(1-\alpha_j^{2i})}{\alpha_j^{2i}}}\D\MUN{1}\\
&=C \prod_{j=n+r+1}^l \frac{1}{\alpha_j^{2i}} \frac{1}{\sqrt{2\pi}}\int_{-\infty}^\infty \mathrm{e}^{-\frac{x_j^2(2-\alpha_j^{2i})}{2\alpha_j^{2i}}}\D \M_1\\
&= C \prod_{j=n+r+1}^l \Big(\alpha_j^i\sqrt{2-\alpha_j^{2i}}\Big)^{-1},\quad k,l\in\N \text{ and }l>n+r.
\end{align*}
Since $\alpha_j^i\sqrt{2-\alpha_j^{2i}}<1$ for every $j>n+r$ and every $i\in I_{n+r}$, we infer from \eqref{absconv} and \cite[Chapter VII]{kno}) that $0<\lim_{l\to\infty}\|\chi_{\varOmega_k\times \R^{l-n-r}}\hsf_{A_{l}^i}\|_{L^2(\MUN{l})}<\infty$. In view of \cite[Corollary 5.1]{bud-pla-s2}, the above discussion shows that conditions (a), (b) and (d) of Proposition \ref{zale-szefa} are satisfied. Now, set
\begin{align*}
\hsf_i \big(\{x_j\}_{j\in\N}\big)=\prod_{j=1}^\infty\frac{1}{\alpha_j^i}\exp\Big(-\frac{x_j^2}{2}\frac{1-\alpha_j^{2i}}{\alpha_j^{2i}}\Big),\quad \{x_j\}_{j=1}^\infty\in \ell^2(\p_{\alpha,i}), i\in I_{n+r},
\end{align*}
where $\p_{\alpha,i}=\{p_j^{(i)}\}_{j=1}^\infty$ with $p_j^{(i)}=|1-\alpha_j^{2i}|$. Since for every $i\in I_{n+r}$, the product $\prod_{j=1}^\infty \alpha_j^{-i}$ is convergent by \eqref{absconv} and \cite[Theorem 3, p.\ 219]{kno}), and the series $\sum_{j=1}^\infty \frac{x_j^2}{2}\frac{1-\alpha_j^{2i}}{\alpha_j^{2i}}$ is convergent for every $\{x_j\}_{j=1}^\infty\in \ell^2(\p_{\alpha,i})$, we see that $\hsf_i$ is well-defined. By \cite[Lemma 11 and Theorem 1.3, page 92]{be-ko}, $\ell^2(\p_{\alpha,i})$ is a set of full measure $\munfty$. Hence $\hsf_i$ may be treated as a $\bor{\R^\infty}$-measurable mapping defined on the whole of $\R^\infty$. For all $i\in I_{n+r}$ and $k,m\in\N$ and $\sigma\in\bor{\R^m}$, we have
\begin{align*}
&\munfty\circ \A^{-i}\big((\sigma\times\R^\infty)\cap(\varOmega_k\times \R^\infty)\big)
=\lim_{l\to\infty} \MUN{l}\big(\A_l^{-i}\big((\sigma\times \R^{l-m})\cap(\varOmega_k\times \R^{l-n-r})\big)\big)\\
&=\lim_{l\to\infty} \int_{\sigma\times \R^{l-m}}\chi_{\varOmega_k\times \R^{l-n-r}}\hsf_{\A_l^i}\D\MUN{l}=\int_{\sigma\times\R^\infty}\chi_{\varOmega_k\times \R^\infty} \hsf_i \D\munfty,
\end{align*}
where the last equality follows from Lebesgue dominated convergence theorem and the fact that $\chi_{\varOmega_k\times\R^\infty}\hsf_{i}$ is essentially bounded. This, together with \cite[Theorem 10.3]{bil}, implies that for every $i\in I_{n+r}$, $\A^i$ is nonsingular and $\hsf_i=\hsf_{\A^i}$ a.e. $[\munfty]$. Hence the conditions (a) to (e) of Proposition \ref{zale-szefa} are satisfied, and consequently $\ca\in \EuScript{S}_{n,r}^*$.
\end{exa}
\begin{rem}
In view of \cite[Theorem 4.1]{sto}, the operator $\ca$ from Example \ref{diag-2} is bounded and cosubnormal, whenever\footnote{If $\alpha_m>1$ for some $m\in\N$, then $\ca$ is unbounded in $L^2(\munfty)$ (cf. \cite[Proposition 2.2]{sto}).} $\alpha_i\in(0,1)$ for every $i\in\N$. It is worth noticing that, if this is the case, cosubnormality of $\ca$ can also be deduced from Proposition \ref{zale-szefa}. To this end, we first observe that $\ca\in \EuScript{S}_{n,0}^*$ for every $n\in\N$ which follows from Theorem \ref{cosubgauss-}. On the other hand, by applying \cite[Corollary 5.5]{bud-pla-s2}, we see that $\ca$ is a bounded operator on $L^2(\munfty)$. Hence, using Proposition \ref{chsub}, we get cosubnormality of $\ca$.
\end{rem}
\begin{rem}
It should be noted that any nontrivial scalar multiple of the identity mapping on $\R^\infty$ cannot satisfy assumptions of Proposition \ref{zale-szefa}. Indeed, suppose $\A\colon\R^\infty\to\R^\infty$ is given by $\A(x)=\alpha x$, with $\alpha \in\R\setminus\{0,1,-1\}$. Then either $\A$ or $\A^{-1}$ is {\em singular}, i.e., one of them is not nonsigular. To see this we first fix  $\alpha\in(0,1)$. Then we take any sequence $\{x_n\}_{n=1}^\infty\subseteq (0,1)$ such that
\begin{align}\label{pizzajedzie}
\sum_{n=1}^\infty x_n<\infty\quad\text{and}\quad\sum_{n=1}^\infty x_n^{\sqrt{\alpha}}=\infty
\end{align}
Set $a_n=\sqrt{2\ln x_n^{-1}}$, $n\in\N$. Using the well-known method of proving that the gaussian measure $\MUN{1}$ is a probability measure due to Poisson (cf. \cite[p.\ 18-19]{stu}) we can show that
\begin{align}\label{tomek}
1-\exp\Big(-\frac{a^2}{2}\Big)\Le\bigg(\frac{1}{\sqrt{2\pi}}\int_{[-a,a]}\exp\Big(-\frac{x^2}{2}\Big)\D\M_1\bigg)^2\Le 1-\exp(-a^2),\quad a\in(0,\infty).
\end{align}
By \eqref{pizzajedzie} and \cite[Theorem 3, p.\ 219]{kno}) the product $\prod_{n=1}^\infty \Big(1-\exp\big(-\frac{a_n^2}{2}\big)\Big)$ is convergent, which in view of \eqref{tomek} implies that
\begin{align*}
0<\prod_{n=1}^\infty \bigg(1-\exp\Big(-\frac{a_n^2}{2}\Big)\bigg)\Le \bigg(\prod_{n=1}^\infty \frac{1}{\sqrt{2\pi}}\int_{[-a_n,a_n]}\exp\Big(-\frac{x^2}{2}\Big)\D\M_1\bigg)^2\Le 1.
\end{align*}
This, in turn, yields
\begin{align*}
0<\munfty\big(\prod_{n=1}^\infty [-a_n,a_n]\big).
\end{align*}
On the other hand, the product $\prod_{n=1}^\infty \Big(1-\exp\big(-\alpha^2 a_n^2\big)\Big)
$ is divergent to $0$ (use again \eqref{pizzajedzie} and \cite[Theorem 3, p.\ 219]{kno}). Hence, by \eqref{tomek}, we deduce that
\begin{align*}
0=\munfty\big(\prod_{n=1}^\infty [-\alpha a_n,\alpha a_n]\big),
\end{align*}
which shows that $\A$ is singular. Similar reasoning proves the same for other $\alpha$'s belonging to $\R\setminus\{0,1-1\}$ (if $|\alpha|>1$, then $\A^{-1}$ is singular).

In fact, modifying slightly the above argument, one can show that any transformation $\A$ of $\R^\infty$ given by $\A\big(\{x_k\}_{k=1}^\infty\big)=\{\alpha_k x_k\}_{k=1}^\infty$, with $\{\alpha_k\}_{k=1}^\infty\subseteq\R$ such that either $\limsup_{k\to\infty} |\alpha_k|\neq 1$ or $\liminf_{k\to\infty}|\alpha_k|\neq 1$, doesn't satisfy assumption of Proposition \ref{zale-szefa}.
\end{rem}
Below, in Proposition \ref{cosubgauss-positive}, we provide another set of conditions implying that $\a$ satisfies the assumptions of Theorem \ref{cosubgauss-}. To this end we need an auxiliary lemma in which a class of infinite matrices (suitable for Proposition \ref{cosubgauss-positive}) is distinguished. Recall that a matrix $(a_{i,j})_{i,j\in\N}$ is called {\em $\eta$-banded}, with $\eta\in\Z_+$, if $a_{i,j}=0$ for all $i,j\in\N$ such that $|i-j|>\eta$. Below, and later on, $\delta_{ij}$ stands for the Kronecker's delta.
\begin{lem}\label{trace}
Let $\p=\{p_j\}_{j=1}^\infty$ and $\{\alpha_j\}_{j=1}^\infty$ be sequences of positive real numbers such that
\begin{enumerate}
\item[(a)] $\{p_j\}_{j=1}^\infty$ and $\{\alpha_j\}_{j=1}^\infty$ belong to $\ell^1(\N)$,
\item[(b)] there are $0<m<M<\infty$ such that $\frac{p_{j+1}}{p_j}\in (m, M)$ for every $j\in\N$,
\item[(c)] $\sup_{j\in\N}\frac{\alpha_j}{p_j}<\infty$.
\end{enumerate}
Let $\eta\in\Z_+$. Let $\widehat\b=(\widehat b_{i,j})_{i,j\in\N}$ be a $\eta$-banded matrix with real entries such that
\begin{enumerate}
\item[(d)] $|\widehat b_{i,j}|\Le \alpha_i$ for all $i,j\in\N$,
\item[(e)] $\widehat\b$ is symmetric, i.e. $\widehat b_{i,j}= \widehat b_{j,i}$ for all $i,j\in\N$.
\end{enumerate}
Let $\b=(\delta_{ij}+\widehat b_{i,j})_{i,j\in\N}$. Then the following conditions are satisfied
\begin{enumerate}
\item[(1)] $\widehat\b$ induces a trace-class operator $\widehat B$ on $\ell^2(\N)$,
\item[(2)] $\det \b$ is well-defined; moreover, $\det\b\neq 0$ if and only if the operator $I+\widehat B$ is invertible,
\item[(3)] there exists $\widetilde C>0$ such that
\begin{align*}
\Big|\sum_{j=1}^\infty x_j^2-\big(\b x\big)_j^2\Big|\Le \widetilde C \sum_{j=1}^\infty x_j^2 p_j,\quad x=\{x_j\}_{j=1}^\infty\in\R^\infty.
\end{align*}
\item[(4)] if $\det \b\neq 0$, then $\b$ induces an invertible transformation of $\R^\infty$.
\end{enumerate}
\end{lem}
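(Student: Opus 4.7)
My plan is to address the four claims in order: (1) and (2) follow from standard trace-class / Fredholm-determinant theory, (3) is a direct estimate using only the banded structure and the hypotheses on $\p$ and $\{\alpha_j\}$, and (4) is the conceptually delicate step.

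For (1), I decompose $\widehat\b$ along its diagonals, $\widehat\b=\sum_{k=-\eta}^{\eta}\widehat\b^{(k)}$, where $\widehat\b^{(k)}$ carries the entries of $\widehat\b$ lying on the $k$-th diagonal and zero elsewhere. Up to a unilateral shift, each $\widehat\b^{(k)}$ is a diagonal operator with entries $\{\widehat b_{i,i+k}\}_i$ dominated by $\{\alpha_i\}_i$ by (d), and $\{\alpha_i\}_i\in\ell^1(\N)$ by (a); hence each $\widehat\b^{(k)}$ is trace class with $\|\widehat\b^{(k)}\|_1\Le\sum_i\alpha_i$, and $\widehat B$, as a sum of $2\eta+1$ such operators, is trace class. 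With $\widehat B$ trace class, (2) follows from the classical fact that the Fredholm determinant $\det(I+\widehat B)$ is well defined and nonzero if and only if $I+\widehat B$ is invertible on $\ell^2(\N)$ (see, e.g., Simon's monograph on trace ideals).

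For (3), set $y_j=\sum_{|k-j|\Le\eta}\widehat b_{j,k}x_k$, so that $(\b x)_j^2-x_j^2=2x_jy_j+y_j^2$. Using $|\widehat b_{j,k}|\Le\alpha_j$ together with the AM--GM inequality $2|x_jx_k|\Le x_j^2+x_k^2$ and interchanging the double sum, I obtain
\[
\sum_j|2x_jy_j|\Le(2\eta+1)\sum_j\alpha_j x_j^2+\sum_k x_k^2\sum_{|j-k|\Le\eta}\alpha_j.
\]
Condition (c) gives $\alpha_j\Le C p_j$, while (b) yields $p_j\Le M^{\eta}p_k$ whenever $|j-k|\Le\eta$; combining these, both summands are dominated by a constant multiple of $\sum_k x_k^2 p_k$. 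For $\sum_j y_j^2$, Cauchy--Schwarz gives $y_j^2\Le(2\eta+1)\alpha_j^2\sum_{|k-j|\Le\eta}x_k^2$, and since $\{\alpha_j\}_j\in\ell^1\subseteq\ell^\infty$, I may replace one factor $\alpha_j$ by $\|\alpha\|_\infty$, reducing to the previous estimate. Summing the two bounds yields (3).

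The main obstacle is (4). By (2), $\det\b\ne 0$ is equivalent to $I+\widehat B$ being invertible on $\ell^2(\N)$; since $\widehat B$ is trace class, the bounded inverse takes the form $I+\widehat C$ with $\widehat C=-(I+\widehat B)^{-1}\widehat B$ again trace class. I would first verify algebraically that the matrix products $\b\cdot(I+\widehat C)$ and $(I+\widehat C)\cdot\b$ both reduce to the identity matrix entry by entry --- every relevant sum is finite because $\b$ is $\eta$-banded, and the identity is inherited from the $\ell^2$-level inverse. Then I would upgrade this algebraic inverse to one of transformations of $\R^\infty$: combining estimate (3), the $\ell^2$-boundedness of $(I+\widehat B)^{-1}$, and the fact that $\ell^2(\p)\subseteq\R^\infty$ is a set of full $\munfty$-measure (cf.\ \cite[Lemma 11 and Theorem~1.3]{be-ko}), I would verify that $\b$ maps $\ell^2(\p)$ bijectively onto a full-measure subset of $\R^\infty$, producing the required inverse transformation in the sense appropriate to the paper's framework.
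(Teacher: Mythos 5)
Your treatment of (1)--(3) is essentially correct. For (1) you decompose $\widehat\b$ into its $2\eta+1$ diagonals, each a shift composed with an $\ell^1$-diagonal operator, whereas the paper estimates $\sum_i\is{|\widehat B|e_i}{e_i}\Le\sum_i\|\widehat Be_i\|$ via the polar decomposition; both are standard and yield the same trace-norm bound. (2) is the same appeal to Fredholm determinant theory for $I+(\text{trace class})$. Your proof of (3) is the paper's estimate reorganized: the cross term is handled by AM--GM instead of a weighted Cauchy--Schwarz combined with the boundedness of the shifts on $\ell^2(\p)$; the only slip is cosmetic (for $|j-k|\Le\eta$ condition (b) gives $p_j\Le\max\{M,m^{-1}\}^\eta p_k$, not $M^\eta p_k$).

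The gap is in (4). Knowing only that $\widehat C=-(I+\widehat B)^{-1}\widehat B$ is trace class tells you that its rows lie in $\ell^2(\N)$, and that is not enough to make the matrix $I+\widehat C$ act entrywise on a set of full measure $\munfty$: the relevant full-measure set is $\ell^2(\p)$ with $p_j\to0$, whose elements are typically unbounded, and an $\ell^2(\N)$ row paired with such a sequence need not give a convergent series (e.g.\ a row $\{j^{-1}\}_j$ against $y_j=j^{-1/2}p_j^{-1/2}(\log j)^{-1}$ with $p_j=q^j$). So the proposed entry-by-entry verification and its ``upgrade'' to a transformation of $\R^\infty$ do not go through on the stated justification; one needs the banded structure of $\b$ to propagate quantitatively to its inverse. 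That is exactly what the paper imports: by \cite[Proposition 2.3]{d-m-s-moc-84} the inverse of a band matrix that is boundedly invertible on $\ell^2(\N)$ has entries bounded by $C\lambda^{|i-j|}$, so Lemma \ref{exponential} gives $\munfty(\dom_{\a})=1$ for $\a$ the matrix of $B^{-1}$, and only then is $\A\B=\B\A=\mathrm{id}$ checked up to null sets. Your route can be salvaged without that citation, but by a different mechanism than the one you describe: your own computation in (3) shows that $\widehat B$ maps $\ell^2(\p)$ into $\ell^2(\N)$, so for $y\in\ell^2(\p)$ the equation $\b x=y$ is solved by $x=y+z$ with $z=-(I+\widehat B)^{-1}\widehat By\in\ell^2(\N)$, exhibiting $\b$ as a bijection of the full-measure set $\ell^2(\p)$ onto itself and avoiding any entrywise application of $\widehat C$. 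That reduction is the missing idea; as written, your argument does not contain it.
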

\begin{proof}
If $\{e_i\}_{i=1}^\infty$ is the standard orthonormal basis for $\ell^2(\N)$, then by (d) we have
\begin{align*}
\sum_{i=1}^\infty \big|\is{|\widehat B|e_i}{e_i}\big|\Le \sum_{i=1}^\infty \big\||\widehat B|e_i\big\|= \sum_{i=1}^\infty \big\|U|\widehat B|e_i\big\|\\
= \sum_{i=1}^\infty \big\|\widehat B e_i\big\|\Le (2\eta+1)\sum_{i=1}^\infty \alpha_i<\infty,
\end{align*}
where $\widehat B=U|\widehat B|$ is the polar decomposition of $\widehat B$. This proves that $\widehat B$ is a trace-class operator. This, according to \cite[p. 46]{bot-gru},  imply (2). For the proof of (3) we first observe that if $S$ is the shift operator $S (x_1, x_2, \ldots)=(0, x_1, x_2, \ldots)$ acting on $\ell^2(\p)$, then, by (b), there exists $C>0$ such that $\max_{i\in I_\eta}\{ \|S^i\|, \|S^{*i}\|\}<C$. Let $c=\max\big\{\sup_{j\in\N}\frac{\alpha_j}{p_j}, \sup_{j\in\N}\alpha_j\big\}$. Then for every $\{x_j\}_{j=1}^\infty\in\ell^2(\p)$ we have
\begin{align*}
\Big|\sum_{j=1}^\infty x_j^2-\big(\b x\big)_j^2\Big|
&\Le \sum_{j=1}^\infty \bigg( \big(\widehat\b x\big)_j^2+ 2|x_j|\big|\big(\widehat\b x\big)_j\big| \bigg)
\Le  2\sum_{j=1}^\infty \sum_{k=-\eta}^\eta\alpha_j^2 x_{j+k}^2
+ 2\|x\|_{\ell^2(\p)}\bigg(\sum_{j=1}^\infty \frac{(\widehat\b x)_j^2}{p_j}\bigg)^{1/2}\\
&\Le 2\sum_{k=-\eta}^\eta \sum_{j=1}^\infty \alpha_j^2 x_{j+k}^2
+2\|x\|_{\ell^2(\p)}\bigg(2\sum_{k=-\eta}^\eta \sum_{j=1}^\infty  \frac{\alpha_j^2x_{j+k}^2}{p_j}\bigg)^{1/2}\\
&\Le 2c^2 \sum_{k=-\eta}^\eta\sum_{j=1}^\infty x_{j+k}^2p_j+2\|x\|_{\ell^2(\p)}\bigg(2c^2\sum_{k=-\eta}^\eta \sum_{j=1}^\infty  x_{j+k}^2p_j\bigg)^{1/2}\\
&\Le 2c^2 (2\eta+1) C^2\|x\|_{\ell^2(\p)}^2+2\|x\|_{\ell^2(\p)}\bigg(2c^2(2\eta+1) C^2 \|x\|_{\ell^2(\p)}^2 \bigg)^{1/2}.
\end{align*}
This proves (3). If $\det\b\neq 0$, then, by (2), the matrix $\b$ induces an invertible operator $B$ in $\ell^2(\N)$. Let $\a$ denote the matrix of the operator $B^{-1}$ with respect to the standard orthonormal basis of $\ell^2(\N)$. In view of \cite[Proposition 2.3]{d-m-s-moc-84} and Lemma \ref{exponential}, $\dom_\a$ is a set of full measure $\munfty$. Let $\A$ be the transformation of $\R^\infty$ induced by $\a$, $\B$ be the transformation induced by $\b$. It is a matter of simple calculations (based on the fact that $BB^{-1}=B^{-1}B=I$) to show that $\A\B$ and $\B\A$ coincide with the identity mapping on $\R^\infty$ up to sets of full measure $\munfty$.
\end{proof}
Regarding Lemma \ref{trace}, we note that if $\widehat\b$ is a matrix satisfying assumptions (a) and (d) of Lemma \ref{trace}, then for every $k \in \N$ we have
\begin{align*}
\Big|\Big(\widehat b^k\Big)_{i,j}\Big| \Le   \Big(\sum_{l=1}^{\infty} \alpha_l\Big)^{k-1} \alpha_i, \quad i,j \in \N.
\end{align*}
As a consequence, we get the following.
\begin{cor} \label{slub}
Under the assumptions of Lemma \ref{trace} for every $k \in \N$ we have $(\widehat \b)^k$ is a trace-class operator, $\det \b^k$ is well-defined and there exists $\widetilde{C}>0$ such that
\begin{align}\label{ogorkowa}
\Big|\sum_{j=1}^\infty x_j^2-\big(\b^k x\big)_j^2\Big|\Le \widetilde C \|x\|_{\ell^2(\p)}^2,\quad x=\{x_j\}_{j=1}^\infty\in\ell^2(\p).
\end{align}
\end{cor}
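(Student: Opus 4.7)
The plan is to reduce each of the three assertions to the corresponding conclusion of Lemma \ref{trace} applied to a carefully chosen auxiliary matrix. The main input is the entry bound stated immediately before the corollary, which I would establish by a quick induction on $k$: writing $(\widehat\b^{k})_{i,j}=\sum_{l}(\widehat\b^{k-1})_{i,l}\widehat b_{l,j}$ and combining the inductive hypothesis with (a) and (d) gives $|(\widehat\b^{k})_{i,j}|\Le(\sum_{l}\alpha_{l})^{k-1}\alpha_{i}$. In parallel, one checks that $\widehat\b^{k}$ is $k\eta$-banded and, thanks to (e), still symmetric, since the product of two symmetric band matrices with commuting index structure preserves symmetry here (alternatively, $\widehat\b$ is a bounded symmetric operator by (1) of Lemma \ref{trace}, so $\widehat\b^{k}$ is self-adjoint).

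For assertion (1), I would observe that $\widehat\b^{k}$ itself satisfies the hypotheses imposed on $\widehat\b$ in Lemma \ref{trace} once we replace $\eta$ by $k\eta$ and the sequence $\{\alpha_{j}\}_{j=1}^{\infty}$ by $\{(\sum_{l}\alpha_{l})^{k-1}\alpha_{j}\}_{j=1}^{\infty}$; the new $\alpha$-sequence is still in $\ell^{1}(\N)$, and conditions (b), (c) on $\p$ are untouched. Conclusion (1) of Lemma \ref{trace} then yields that $\widehat\b^{k}$ induces a trace-class operator on $\ell^{2}(\N)$.

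For (2) and (3), I expand $\b^{k}=(I+\widehat\b)^{k}=I+\widehat{\c}$, where $\widehat{\c}:=\sum_{j=1}^{k}\binom{k}{j}\widehat\b^{j}$. Summing the entry bounds for the $\widehat\b^{j}$ gives $|\widehat c_{i,j}|\Le C_{k}\alpha_{i}$ for a constant $C_{k}$ depending only on $k$ and $\sum_{l}\alpha_{l}$, and $\widehat{\c}$ is $k\eta$-banded and symmetric. Hence $\widehat{\c}$ satisfies all of (a)--(e) of Lemma \ref{trace} with $\eta\mapsto k\eta$ and $\alpha_{j}\mapsto C_{k}\alpha_{j}$. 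Applying conclusion (2) of that lemma to the matrix $I+\widehat{\c}=\b^{k}$ yields the well-definedness of $\det\b^{k}$, while conclusion (3) applied to the same matrix furnishes a constant $\widetilde C>0$ for which the inequality \eqref{ogorkowa} holds on $\ell^{2}(\p)$.

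The only real obstacle is the bookkeeping: one must check that scaling $\{\alpha_{j}\}$ by the $k$-dependent constant and widening the band from $\eta$ to $k\eta$ do not damage conditions (a)--(e); since the scaling is by a constant and $\p$ is unchanged, the $\ell^{1}$ membership and the ratio bound on $p_{j+1}/p_{j}$ are preserved, so no further analytic estimate is needed beyond Lemma \ref{trace} itself.
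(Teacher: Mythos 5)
Your proposal is correct and follows essentially the same route as the paper: the paper's entire argument consists of the entry bound $|(\widehat\b^k)_{i,j}|\Le(\sum_l\alpha_l)^{k-1}\alpha_i$ stated just before the corollary, from which the three assertions are declared to follow by reapplying Lemma \ref{trace} to the rescaled, $k\eta$-banded data — exactly the reduction you carry out in detail (your decomposition $\b^k=I+\widehat{\c}$ with $\widehat{\c}=\sum_{j=1}^k\binom{k}{j}\widehat\b^j$ is the natural way to make the paper's ``as a consequence'' precise). The only cosmetic point is your first justification of the symmetry of $\widehat\b^k$, which is more simply $(\widehat\b^k)^{T}=(\widehat\b^{T})^k=\widehat\b^k$; your alternative via self-adjointness is already fine.
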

\begin{prop}\label{cosubgauss-positive}
Let $n,r\in\Z_+$ and $\eta\in\N$. Let $\widehat\b=(\widehat b_{ij})_{i,j\in\N}$, $\p=\{p_j\}_{j=1}^\infty$ and $\{\alpha_j\}_{j=1}^\infty$ satisfy conditions {\em (a)} to {\em(e)} of Lemma \ref{trace}. Denote by $\b$ the matrix $(\delta_{ij}+\widehat b_{ij})_{i,j\in\N}$ and by $\B$ the transformation of $\R^\infty$ induced by $\b$. Let $\s=\{s(k)\}_{k=1}^\infty \subseteq \N$ be an increasing sequence. Assume that
\begin{enumerate}
\item[(f)] there exists $\rho>0$ such that $|\det \b_{k}| \in [\rho,+\infty)$ for all $k\in\N$.
\end{enumerate}
Then $\B$ is invertible, $\B^{-1}$ is nonsingular and $C_{\B^{-1}}\in \EuScript{S}_{n,r}^*$.
\end{prop}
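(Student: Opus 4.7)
The plan is to apply Theorem~\ref{cosubgauss-} with $\A:=\B^{-1}$ and, for every $k\in\N$, $\A_k:=\b_k^{-1}$ acting on $\R^{s(k)}$, using the sequence $\s$ from the hypotheses. By Lemma~\ref{trace}(1)--(2), $\widehat\b$ extends to a trace-class operator $\widehat B$ on $\ell^2(\N)$ and the Fredholm determinant $\det\b$ is well defined. Combined with (f) and the standard identity $\det\b=\lim_k\det\b_k$ (which holds for $I$ plus a trace-class operator), this gives $|\det\b|\geqslant\rho>0$, so Lemma~\ref{trace}(4) provides an invertible transformation $\B$ of $\R^\infty$ induced by $\b$ and its inverse $\A$ induced by the matrix $\a$ of $(I+\widehat B)^{-1}$ in the standard basis of $\ell^2(\N)$. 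By the Demko-Moss-Smith estimate \cite[Proposition~2.3]{d-m-s-moc-84} the entries of $\a$ decay exponentially off the diagonal, so Lemma~\ref{exponential} yields $\munfty(\dom_\a)=1$.

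The heart of the proof is identifying $\hsf_{\A^i}$ for $i\in I_{n+r}$. The natural candidate is
\[
h_i(x):=|\det\b|^i\exp\tfrac{|x|^2-|\b^i x|^2}{2},
\]
which is well defined on the full-$\munfty$-measure set $\ell^2(\p)$ because, by Corollary~\ref{slub}, the exponent is bounded in absolute value by $\tfrac12\widetilde C\|x\|_{\ell^2(\p)}^2$. I would verify $\munfty(\A^{-i}(\sigma))=\int_\sigma h_i\,\D\munfty$ first for cylinders $\sigma=\widetilde\sigma\times\R^\infty$ with $\widetilde\sigma\in\bor{\R^m}$: the $\eta$-bandedness of $\b$ implies that the first $m$ coordinates of $\B^i x$ depend only on $x_1,\ldots,x_{m+i\eta}$ and agree with those of $\b_p^i(x_1,\ldots,x_{s(p)})$ whenever $s(p)\geqslant m+i\eta$, which reduces the computation to the finite-dimensional formula \eqref{pochodna} applied to $\A_p^i=\b_p^{-i}$. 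Passing to the limit via dominated convergence, with integrands uniformly controlled by Corollary~\ref{slub} and (f), yields the cylindrical identity, and a standard Dynkin $\pi$-$\lambda$ argument extends it to all Borel sets. Consequently $\A^i$ is nonsingular with $\hsf_{\A^i}=h_i$ $\munfty$-a.e.

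With $\hsf_{\A^i}$ in hand, the remaining hypotheses of Theorem~\ref{cosubgauss-} become routine. Take $\kappa=1$ and $\varOmega_l=[-l,l]^{s(1)}$; integrability in (iii) follows from the exponential form of $h_i$. For (iv), each $\b_k$ is symmetric (by (e) of Lemma~\ref{trace}) and invertible (by (f)), so $\A_k$ is normal, whence \eqref{choinka} and Theorem~\ref{szarlotka} give dense definition and cosubnormality of $C_{\A_k}$. Condition (vi) is in fact an exact equality of sets by the bandedness argument above. Condition (vii) follows from operator-norm convergence of the inverses of $\b_p$ to $(I+\widehat B)^{-1}$ on $\ell^2(\N)$ (continuity of inversion near an invertible operator), which gives coordinatewise convergence $\A_p^i(x_1,\ldots,x_{s(p)})\to\A^i x$ $\munfty$-a.e. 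Finally, (v) is obtained by computing both sides from \eqref{pochodna} applied to $\A_l^i=\b_l^{-i}$ and from $h_i$, and invoking dominated convergence with the uniform Corollary~\ref{slub} bound. The main obstacle is the identification of $\hsf_{\A^i}$ on $\R^\infty$, which requires simultaneously exploiting the bandedness of $\b$ (to reduce cylindrical computations to finite dimensions), the Corollary~\ref{slub} bound (to justify the passage to the limit), and the uniform lower bound (f) (to control the determinant prefactors).
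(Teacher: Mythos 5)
Your overall strategy coincides with the paper's: apply Theorem~\ref{cosubgauss-} with $\A=\B^{-1}$ and $\A_l=\B_l^{-1}$, use Lemma~\ref{trace} and the convergence $\det\b=\lim_k\det\b_k$ for $I$ plus trace class to get invertibility of $\B$, identify $\hsf_{\A^i}$ with $|\det\b|^i\exp\tfrac12(|x|^2-|\b^ix|^2)$ by reducing cylindrical computations to the finite-dimensional formula \eqref{pochodna} via bandedness and passing to the limit with the Corollary~\ref{slub} majorant, and then check (iii)--(vii) (with (vi), (vii) coming from bandedness and convergence of the finite sections). This is essentially the paper's proof.

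There is, however, one concrete step that fails as written: the choice $\kappa=1$, $\varOmega_l=[-l,l]^{s(1)}$. The only available pointwise bound is $\hsf_{\A^i}(x)\Le|\det\b|^i\exp\big(\tfrac{\widetilde C}{2}\sum_jx_j^2p_j\big)$, so
\begin{align*}
\int_{\varOmega_k\times\R^\infty}\hsf_{\A^i}^2\,\D\munfty\ \Le\ \text{(const)}\cdot\int_{\varOmega_k}\exp\Big(\widetilde C\sum_{j\Le s(\kappa)}x_j^2p_j\Big)\D\MUN{s(\kappa)}\cdot\prod_{j>s(\kappa)}\frac{1}{\sqrt{2\pi}}\int_\R\exp\Big(-\frac{(1-2\widetilde Cp_j)t^2}{2}\Big)\D\M_1(t),
\end{align*}
and the factors in the infinite product are finite only when $1-2\widetilde Cp_j>0$. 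Since $\widetilde C$ may be large and $(p_j)$ merely tends to $0$, this can fail for small $j$, so with $\kappa=1$ condition (iii) is not guaranteed; the paper fixes $\kappa$ precisely so that $1-2\widetilde Cp_j>0$ for all $j>s(\kappa)$, confining the problematic coordinates to the bounded box $\varOmega_k$. The same point affects your dominated-convergence step for unrestricted cylinders $\sigma\times\R^\infty$: the majorant $\exp(\widetilde C\|x\|^2_{\ell^2(\p)})$ need not be integrable over such sets, which is why the paper intersects everything with $\varOmega_k\times\R^\infty$ before passing to the limit and only then identifies the measures via \cite[Theorem 10.3]{bil}. Both issues are repaired by the paper's choice of $\kappa$; with that correction your argument goes through.
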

\begin{proof}
Note that, by \cite[p. 46]{bot-gru}, (f) and Lemma \ref{trace}, $|\det\b|=\lim_{k\to\infty}|\det\b_k|\in[\rho,\infty)$ and $\det (\b^i)_l\neq 0$ for $i\in I_{n+r}$ and sufficiently large $l\in\N$. Hence, by (4) of Lemma \ref{trace}, the transformation $\B$ is invertible. According to (3) of Lemma \ref{trace} and Corollary \ref{slub} there exists $\widetilde{C}>0$ such that the inequality \eqref{ogorkowa} holds for every $k \in I_{n+r}$. Hence the formula
\begin{align*}
\hsf_i \Big(\{x_j\}_{j=1}^\infty\Big)=|\det \b|^{i} \exp \frac{1}{2}\Big(\sum_{j=1}^\infty x_j^2 -(\b^i x)_j^2 \Big),\quad \{x_j\}_{j=1}^\infty\in \ell^2(\p),
\end{align*}
defines a $\bor{\R^\infty}$-measurable function on $\R^\infty$ for every $i\in I_{n+r}$. Now, let us choose $\kappa\in \N$ so that $1-2\widetilde C p_j >0$ for every $j> s(\kappa)$, and let $\{\varOmega_k\colon k\in\N\}$ be given by $\varOmega_k=[-k,k]^{s(\kappa)}$.
Then, employing change-of-variable theorem and the fact that $\ell^2(\p)$ is a set of full measure $\munfty$ in $\R^\infty$, we get
\begin{align}\label{exarnd1+}
&\int_{\varOmega_k\times\R^\infty} \exp \Big(\sum_{j=1}^\infty x_j^2 -(\b^i x)_j^2 \Big)\D\munfty \notag
\Le \int_{\varOmega_k\times\R^\infty} \exp \Big(\widetilde C \big\|\{x_j\}_{j=1}^\infty\big\|^2_{\ell^2(\p)} \Big)\D\munfty\\\notag
&= \int_{\varOmega_k} \exp \Big(\widetilde C \sum_{j=1}^{s(\kappa)} x_j^2p_j \Big)\D\MUN{s(\kappa)} \cdot \prod_{j=s(\kappa)+1}^\infty \frac{1}{\sqrt{2\pi}}\int_\R \exp\Big(-\frac{(1-2\widetilde Cp_j)x_j^2}2\Big)\D \M_1 \\
&=\int_{\varOmega_k} \exp \Big(\widetilde C \sum_{j=1}^{s(\kappa)} x_j^2p_j \Big)\D\MUN{s(\kappa)} \cdot \prod_{j=s(\kappa)+1}^\infty \frac{1}{\sqrt{1-2\widetilde Cp_j}},\quad i\in I_{n+r}, k\in\N.
\end{align}
Since $\p\in\ell^1(\N)$, we deduce that $\prod_{j=s(\kappa)+1}^\infty \frac{1}{\sqrt{1-2\widetilde Cp_j}}$ is convergent (cf. \cite[Chapter VII]{kno}). Thus, by \eqref{exarnd1+}, the function $\chi_{\varOmega_k\times\R^\infty}\hsf_{i}$ belongs to $L^2(\munfty)$ for all $i\in I_{n+r}$ and $k\in\N$. Note that there exists $C>0$ such that
\begin{align}\label{ogorkowa_z_maka}
\Big|\sum_{j=1}^{s(l)} x_j^2-\big((\b^k)_l x\big)_j^2\Big|\Le C \sum_{j=1}^{s(l)} x_j^2 p_j,\quad x\in \R^{s(l)}, l \in \N, k \in I_{n+r},
\end{align}
(see the proof of assertion (3) of Lemma \ref{trace}). Hence arguing as in \eqref{exarnd1+} we show that the function $\chi_{\varOmega_k\times\R^\infty}\varDelta \hsf_{\B_l^{-i}}$ belongs to $L^2(\MUN{s(l)})$ for all $i\in I_{n+r}$, $k\in\N$ and every $l\in \N$ such that $l\Ge \kappa$, where $\B_l^i$ is a transformation of $\R^{s(l)}$ induced by $(\b^i)_l$. Since, for every $x=\{x_i\}_{i=1}^\infty\in \ell^2(\p)$ we have
\begin{multline*}
\Big| \sum_{j=1}^{s(l)}\Big( x_j^2 - \big((\b^i)_l \pi_{s(l)}x\big)^2_j \Big)  -  \sum_{j=1}^{\infty}\Big( x_j^2 - (\b^i x)^2_j \Big)\Big| \\
= \Big| \sum_{j = s(l)- i\eta}^{s(l)} x_j^2 - \big((\b^i)_l \pi_{s(l)}x \big)^2_j  \Big| + \Big| \sum_{j = s(l)- i\eta}^{\infty} x_j^2 - (\b^i x)^2_j  \Big|.
\end{multline*}
This combined with \eqref{ogorkowa} and \eqref{ogorkowa_z_maka} proves that
\begin{align*}
\hsf_i (x) =\lim_{l\to\infty} \varDelta \hsf_{\B_l^{-i}}(x), \quad x \in \ell^2(\p).
\end{align*}
By \eqref{exarnd1+} and \eqref{ogorkowa_z_maka}, the function $H_{k,i}\colon \ell^2(\p)\to\C$ given by
\begin{align*}
H_{k,i}(x)=\sup_{l\in\N}\Big\{\big|\det (\b^i)_l\big|\Big\} \chi_{\varOmega_k\times \R^\infty}\exp\tfrac12\Big( C\|x\|^2_{\ell^2(\p)}\Big)
\end{align*}
is a majorant in $L^2(\munfty)$ for the sequence $\{\chi_{\varOmega_k\times \R^\infty } \varDelta \hsf_{\B_l^{-i}}\}_{l=1}^\infty$, $k\in\N$ and $i\in I_{n+r}$. Hence, by the Lebesgue dominated convergence theorem we get
\begin{align}\label{debil}
\chi_{\varOmega_k\times\R^\infty}\hsf_i=\lim_{l\to\infty} \chi_{\varOmega_k\times\R^\infty} \varDelta \hsf_{\B_l^{-i}},\quad k\in\N, i\in I_{n+r}.
\end{align}
Since for all $i\in I_{n+r}$ and $l\in\N$, the matrices $\b^i$ are banded and the matrices $(\b^i)_l$ are invertible, we deduce from \eqref{debil} that for all $i\in I_{n+r}$, $k,m\in\N$, and $\sigma\in\bor{\R^m}$, we have
\begin{align*}
\munfty\bigg( \B^i \big((\sigma\times\R^\infty)\cap(\varOmega_k\times \R^\infty)\big)\bigg)
&=\lim_{l\to\infty} \MUN{s(l)}\bigg(\B_l^i\big(\big(\sigma\times \R^{s(l)-m})\cap(\varOmega_k\times \R^{s(l)-s(\kappa)}\big)\big)\bigg)\\
&=\lim_{l\to\infty} \int_{\sigma\times \R^{s(l)-m}}\chi_{\varOmega_k\times \R^{s(l)-s(\kappa)}}\hsf_{\B_l^{-i}}\D\MUN{s(l)}\\
&=\int_{\sigma\times\R^\infty}\chi_{\varOmega_k\times \R^\infty} \hsf_i \D\munfty.
\end{align*}
This, by \cite[Theorem 10.3]{bil}, implies that for every $i\in I_{n+r}$, the transformation $\B^{-i}$ is nonsingular and $\hsf_i=\hsf_{\B^{-i}}$ a.e. $[\munfty]$. This, Theorem \ref{szarlotka}, Lemma \ref{exponential} and equality \eqref{debil}  imply that conditions (i) to (vii) of Theorem \ref{cosubgauss-} hold with $\A=\B^{-1}$ and $\A_l=\B_l^{-1}$, $l\in\N$ (conditions (vi) and (vii) follow easily from $\eta$-boundedness of $\b$). Hence, applying Theorem \ref{cosubgauss-}, we get that $C_{\B^{-1}}$ belongs to $\EuScript{S}_{n,r}^*$.
\end{proof}
\begin{rem}
Regarding Proposition \ref{cosubgauss-positive}, we note that there is an another way of producing the inverses of an $\eta$-bounded matrix $\b$ and a transformation $\B$ via inductive technique. To this end, instead of conditions (a)-(e) of Lemma \ref{trace}, we assume that
\begin{enumerate}
\item[(1)] there exists $\rho>0$ such that $\sigma(\b_{k}) \subseteq [\rho,+\infty)$ for all $k\in\N$,
\item[(2)] for every $k\in\N$, $\b_{k}$ is normal in $(\R^{s(k)},|\cdot|)$,
\item[(3)] for every $\varepsilon >0$ there is $k_0\in\N$ such that for every $m\Ge l\Ge k_0$
\begin{align*}
|\b_{m}\iota_{s(l)}^{s(m)} x-\iota_{s(l)}^{s(m)}\b_{l} x|\Le \varepsilon \big(|x|+|\b_{m}\iota_{s(l)}^{s(m)} x|+|\b_{l}x|\big),\quad x\in\R^{s(l)},
\end{align*}
where $\iota^s_t\colon \R^t\ni(x_1,\ldots, x_t)\mapsto (x_1,\ldots, x_t, 0,\ldots, 0)\in\R^s$ for $t\Le s$.
\end{enumerate}
Then we proceed as follows. For $j\in\N$, let $\B_j$ denote the linear mapping $\R^{s(j)}\to\R^{s(j)}$ induced by $\b_j$. By \cite[Theorem 1.1, Lemma 4.3]{jan} and (3), the sequence $\{\B_{j}\}_{j=1}^\infty$ induces a closable densely defined operator $B_\infty$ acting on $\ell^2(\N)$ according to the formula
\begin{gather*}
\dom(B_\infty)=\bigcup_{k\in\N}\{\iota_k x\colon x\in\R^k \text{ such that } \lim_{j\to\infty} \iota_{s(j)} \b_{j}\iota_k^{s(j)} x \in\ell^2(\N)\} \\
B_\infty \iota_k x=\lim_{j\to\infty} \iota_{s(j)} \b_{j} \iota_k^{s(j)} x,\quad \iota_k x\in \dom(B_\infty),
\end{gather*}
where $\iota_t\colon \R^t\ni(x_1,\ldots, x_t)\mapsto (x_1,\ldots, x_t, 0,\ldots)\in\R^\infty$ for $t\in\N$. $B_\infty$ is the inductive limit of $\{\B_{j}\}_{j=1}^\infty$. In view of \cite[Corollary 2.2, Lemma 4.3]{jan} and (1), $\sigma(B_\infty)\subseteq [\rho,+\infty)$ and thus $B_\infty$ is invertible. From this point we proceed as in the proof of (4) of Lemma \ref{trace}.
\end{rem}
With help of Proposition \ref{cosubgauss-positive} we can deliver an example of composition operators belonging to $\EuScript{S}_{n,r}^*$ induced by non-diagonal transformation of $\R^\infty$. The operator is induced by a 1-banded matrix.
\begin{exa}
Let $n,r\in\Z_+$. Let $\b$ be a matrix given by
\begin{align*}\b=
\left(
  \begin{array}{cccccc}
    1           &q^1    &0      &0      &0          &\cdots \\
    q^1         &1      &q^2    &0      &0          &\cdots \\
    0           &q^2    &1      &q^3    &0          &\cdots \\
    0           &0      &q^3    &1      &q^4        &\cdots \\
    \cdots      &\cdots &\cdots &\cdots &\cdots     &\cdots\\
  \end{array}
\right),
\end{align*}
with $q\in \big(0,\frac{\sqrt2}{2}\big)$. Let $\{s(k)\}_{k=1}^\infty$ be given by $s(k)=k$. Set $\p=\{q^j\}_{j=1}^\infty$ and define $\alpha_j=q^{j-1}$, $j\in\N$. Clearly, $\det \b_l = \det \b_{l-1} - q^{2l-2} \det \b_{l-2}$, $l\Ge 3$, which implies that $ 1 - \sum_{k=2}^{\infty} q^{2k-2} < \det \b_{l} <1$ for $l \Ge 2$. As a consequence, the assumptions of Proposition  \ref{cosubgauss-positive} are satisfied. Hence $\b$ induces an invertible transformation $\B$ of $\R^\infty$ such that $\B^{-1}$ is nonsingular and $C_{\B^{-1}}$ belongs to $\EuScript{S}^*_{n,r}$. If fact, $C_{\B^{-1}}\in\EuScript{S}^*_{n,r}$ for every $n,r\in \Z_+$.

Now, let $\xx\subseteq L^2(\munfty)$ be the family defined as in the Step 5 of proof of Theorem \ref{cosubgauss-} with $\A=\B^{-1}$. According to Lemma \ref{adjointpowers} and \eqref{x} we see that $\xx \subseteq \dom\Big( \big(C^*_{B^{-1}}\big)^{n} \Big)$ for every $n \in \Z_+$. Arguing as in the proof of Step 5 of Theorem \ref{cosubgauss-} and using \cite[Lemma 3.2]{b-d-p-matrix-subn} we see that $\xx$ is linearly dense in $L^2(\munfty)$. Thus every power of $C^*_{B^{-1}}$ is densely defined. This combined with \cite[Theorem 52]{b-j-j-s-wco} proves that $\dom^\infty(C^*_{B^{-1}})$ is dense in $L^2(\munfty)$. On the other hand, by Proposition \ref{chsub}, the operator $C^*_{B^{-1}}|_{\dom^\infty(C^*_{B^{-1}})}$ is subnormal.
\end{exa}
\begin{rem}
Concluding the paper we point out that Theorem \ref{cosubgauss-} and Propositions \ref{zale-szefa} and \ref{cosubgauss-positive} rely essentially on the very precise knowledge of the Radon-Nikodym derivative $\hsf_\A$, which is due to the inequality \eqref{cosubgauss-positive}. It seems desirable to look for some inductive-limit-based criteria for cosubnormality, which would be independent of the knowledge of $\hsf_\A$.
\end{rem}

\end{document}